\documentclass{birkjour}

\newtheorem{thm}{Theorem}[section]

\newtheorem{lem}[thm]{Lemma}
\newtheorem{prop}[thm]{Proposition}

\theoremstyle{definition}
\newtheorem{defn}[thm]{Definition}

\theoremstyle{remark}
\newtheorem{rem}[thm]{Remark}
\newtheorem{ex}[thm]{Example}

\numberwithin{equation}{section}

\usepackage{graphicx}
\usepackage{amssymb}
\usepackage{refcount}

\newcommand{\eqnref}[1]{(\getrefnumber{#1})}
\newcommand{\numref}[1]{\getrefnumber{#1}}

\renewcommand{\subjclassname}{Mathematics Subject Classification (2020)}

\begin{document}

\title[Dynamics of Weighted Backward Shifts]
{Dynamics of Weighted Backward Shifts on Ces\`aro Spaces of Rooted Trees}

%----------Author 1
\author[X. Chen]{Xiang Chen}

\address{School of Mathematics\\
Tianjin University\\
Tianjin 300350\\
P. R. China}

\email{2020233018@tju.edu.cn}

	\thanks{Corresponding author: Xiang Chen. This work was supported in part by
	the National Natural Science Foundation of China, Grant No. 12571088.}
%----------Author 2
\author[M.-H. Cheng]{Meng-Huan Cheng}

\address{School of Mathematics\\
Tianjin University\\
Tianjin 300350\\
P. R. China}

\email{cmh@tju.edu.cn}

%----------Author 3
\author[L. Zhang]{Liang Zhang}

\address{School of Marine Science and Technology\\
Tianjin University\\
Tianjin 300050\\
P. R. China}

\email{168zhangliang2011@163.com}

%----------Author 4
\author[Z.-H. Zhou]{Ze-Hua Zhou}

\address{School of Mathematics\\
Tianjin University\\
Tianjin 300350\\
P. R. China}

\email{zehuazhoumath@aliyun.com; zhzhou@tju.edu.cn}

%----------Classification and keywords
\subjclass{Primary 47A16; Secondary 47B37}

\keywords{Weighted backward shift, Ces\`aro space, rooted directed tree,
$\mathcal F$-transitivity, orbit limit point, chaos}

\begin{abstract}
We study the dynamics of weighted backward shifts on Ces\`aro spaces
associated with leafless locally finite rooted trees. We first characterize
their boundedness in terms of adjacent level cardinalities and edge weights.
We then characterize their \(\mathcal F\)-transitivity by a growth condition
involving level cardinalities, products of weights along paths, and a
level-dependent Ces\`aro factor. As consequences, we obtain criteria for
hypercyclicity, weak mixing, topological ergodicity, and topological mixing.
We also characterize the existence of nonzero orbit limit points and chaotic
weighted shifts, the latter in terms of normalized fixed points and unit
flows satisfying an explicit summability condition. Examples show that
\(\mathcal F_{\underline d>0}\)-transitivity need not imply frequent
hypercyclicity, that a nonhypercyclic weighted shift may nevertheless
have a nonzero orbit limit point, and that topological mixing need not
imply chaos.
\end{abstract}

%%% ----------------------------------------------------------------------
\maketitle
%%% ----------------------------------------------------------------------

%\tableofcontents

\section{Introduction}

Linear dynamics concerns the behavior of iterates of continuous linear
operators; we refer to \cite{CC} for a systematic account. A central
problem is to determine whether an operator admits a dense orbit.
Weighted shifts form a fundamental class of examples. Hypercyclic weighted
shifts were characterized by Salas \cite{Salas}, while hypercyclic and
chaotic weighted shifts were further studied by Grosse-Erdmann \cite{GE}.
More recent work has examined the relation between chaos and frequent
hypercyclicity for weighted shifts on sequence spaces \cite{CGM}, as well
as hypercyclicity and chaos for asymptotically unweighted shifts on
classical \(\ell^p\)-spaces \cite{NT}. A bounded linear operator
\(S\) on a separable infinite-dimensional Banach space
\(X\) is called \textit{hypercyclic} if there exists \(x\in X\) such that
\[
\{S^n x:n\in\mathbb N_0\}
\]
is dense in \(X\). Hypercyclicity is closely related to weak mixing,
topological ergodicity, topological mixing, and chaos. Other related
dynamical notions have also been extensively studied.
In particular, \(\mathcal F\)-transitivity and topological
\(\mathcal F\)-recurrence for shifts on directed trees were studied
in \cite{CRR}. Strong transitivity associated with Furstenberg families was developed
in \cite{KR2}, while upper frequent hypercyclicity and related notions
were studied in \cite{BG}.
Orbital limit points of weighted shifts were investigated in
\cite{BCGM,CS}.

Replacing the classical index sets \(\mathbb N_0\) and \(\mathbb Z\) by
directed trees leads to a natural extension of weighted-shift dynamics in
which branching becomes part of the problem. The operator-theoretic
foundations of weighted shifts on directed trees were developed by
Jab{\l}o{\'n}ski, Jung and Stochel \cite{CKS}. Their boundedness and
dynamical properties, including hypercyclicity, weak mixing, and mixing,
were studied in \cite{KGD}, while their chaotic behavior was investigated
in \cite{KDP}. Related dynamical questions on Hardy-type tree spaces
have been considered in \cite{ARMA,CZZ,Ka3}.

On the other hand, the classical discrete Ces\`aro sequence spaces
\(\operatorname{ces}_p\) form an important family of Banach sequence spaces
generated by Ces\`aro averaging. Their geometric properties have been studied
from several viewpoints, including fixed-point-related geometry \cite{CH},
the James constant and \(B\)-convexity \cite{MPS}, interpolation
\cite{AM}, and vector-lattice structure \cite{GPW}. Ces\`aro--Orlicz
sequence spaces provide a further extension of this setting and were
investigated in \cite{Kubiak}. Ces\`aro function spaces on rooted trees
were introduced and studied in \cite{KMS}, where the construction first
averages the function values over each level and then applies cumulative
Ces\`aro averaging across successive levels. The space
\(\operatorname{Ces}_p(T)\) considered here follows this tree-indexed
Ces\`aro structure and provides a natural extension of the classical
discrete Ces\`aro sequence space. Indeed, when \(T\) is a rooted path,
each level contains exactly one vertex, so that \(c_n=1\), and the norm of
\(\operatorname{Ces}_p(T)\) reduces, up to the indexing convention, to
the classical \(\operatorname{ces}_p\)-norm.

Against this background, the present paper studies the linear dynamics of
weighted backward shifts on \(\operatorname{Ces}_p(T)\). This differs from
\cite{KMS}, where topological properties of Ces\`aro spaces on rooted trees
and multiplication operators on these spaces were studied.
Our setting is also closely related to the little generalized Hardy space
\(H_0^p(T)\) of rooted trees considered in \cite{CZZ}, where
\(\mathcal F\)-transitivity and nonzero orbit limit points of weighted
backward shifts were studied. The essential difference lies in the norm
structure. In the Hardy-space setting of \cite{CZZ}, the norm is determined
by the supremum of levelwise \(p\)-means, whereas the norm of
\(\operatorname{Ces}_p(T)\) couples successive levels through cumulative
Ces\`aro averaging. Consequently, the corresponding dynamical criteria
involve a different interaction between the branching geometry, encoded by
the level cardinalities \(c_n\), and products of weights along paths.
A characteristic factor in the resulting criteria is
\[
\Phi_p(n)
=
\biggl(
\sum_{s=n}^{\infty}\frac{1}{(s+1)^p}
\biggr)^{-1/p}.
\]

Our characterization of chaos builds on the fixed-point framework for
chaotic weighted shifts on directed trees developed in \cite{KDP}.
That framework relates dense periodic points to suitably normalized fixed
points on descendant subtrees. In the setting of
\(\operatorname{Ces}_p(T)\), we translate this condition into an explicit
summability condition for unit flows. Since the Ces\`aro norm couples
successive levels through cumulative averages, this condition does not
follow directly from the corresponding characterizations for
\(\ell^p\) or \(c_0\) in \cite{KDP}.

Our principal results are as follows. We establish basic properties of
\(\operatorname{Ces}_p(T)\) and characterize the boundedness of
\(B_\lambda\). We then give a common growth characterization of
\(\mathcal F\)-transitivity and
\(\widetilde{\mathcal F}\)-transitivity, yielding criteria for
hypercyclicity, weak mixing, topological ergodicity, and topological
mixing. We also characterize nonzero orbit limit points by the existence
of a vertex \(v\) and an increasing sequence \((n_k)\) along which
\[
c_{|v|+n_k}\Phi_p(|v|+n_k)
\max_{u\in\operatorname{Chi}^{n_k}(v)}
|\lambda(v\to u)|
\longrightarrow\infty.
\]
Finally, we prove the equivalence of chaos, density of periodic points,
the existence, for every vertex, of a normalized fixed point supported
on its descendant subtree, and the existence of corresponding unit flows
satisfying an explicit summability condition. Our examples show that
\(\mathcal F_{\underline d>0}\)-transitivity does not imply frequent
hypercyclicity, that a nonhypercyclic weighted shift may nevertheless
have a nonzero orbit limit point, and that topological mixing does not
imply chaos.

% ========================================================================
% The rest of the paper starts here
% ========================================================================

\section{Preliminaries}
\subsection{Rooted directed trees and Ces\`aro spaces}
A directed graph is a pair \(T=(V,E)\), where \(V\) is a countably
infinite set of vertices and \(E\subseteq V\times V\) is the set of
directed edges. It is called a \textit{directed tree} if its underlying
undirected graph is connected and contains no cycles, and every vertex
has at most one parent. A directed tree is rooted if it has a unique
vertex \(o\), called the \textit{root}, with no parent.

For \(v\neq o\), its parent is denoted by \(\operatorname{par}(v)\), and
\[
\operatorname{Chi}(v)
=
\{u\in V:(v,u)\in E\}
\]
denotes the set of its children. Set
\[
\operatorname{Chi}^0(v)=\{v\},
\qquad
\operatorname{Chi}^{n+1}(v)
=
\bigcup_{u\in\operatorname{Chi}^n(v)}
\operatorname{Chi}(u),
\]
and define the descendant subtree rooted at \(v\) by
\[
V(v)=\bigcup_{n=0}^{\infty}\operatorname{Chi}^n(v).
\]
The tree is \textit{locally finite} if every \(\operatorname{Chi}(v)\) is finite,
and leafless if \(\operatorname{Chi}(v)\neq\varnothing\) for every \(v\in V\).

Throughout this paper, \(T=(V,E)\) is a rooted, leafless, locally finite
directed tree. For \(v\in V\), let \(|v|\) denote the length of the
unique path from \(o\) to \(v\), and set
\[
D_n=\{v\in V:|v|=n\},
\qquad
c_n=|D_n|,
\qquad n\in\mathbb N_0.
\]
Let \(1<p<\infty\). For \(f:V\to\mathbb C\), define
\[
A_n(f)
=
\frac{1}{c_n}\sum_{v\in D_n}|f(v)|.
\]
Following \cite{KMS}, the Ces\`aro space on \(T\), denoted by
\(\operatorname{Ces}_p(T)\), is defined by
\[
\operatorname{Ces}_p(T)
=
\left\{
f:V\to\mathbb C:
\|f\|_{\operatorname{Ces}_p(T)}<\infty
\right\},
\]
where
\[
\|f\|_{\operatorname{Ces}_p(T)}
=
\Biggl[
\sum_{n=0}^{\infty}
\biggl(
\frac{1}{n+1}\sum_{r=0}^{n}A_r(f)
\biggr)^p
\Biggr]^{1/p}.
\]
The following result follows from \cite[Theorem~1(c)]{KMS}.
\begin{lem}\label{lem:cesaro-banach}
	Let \(1<p<\infty\). Then
	\(\operatorname{Ces}_p(T)\), endowed with
	\(\|\cdot\|_{\operatorname{Ces}_p(T)}\), is a Banach space.
\end{lem}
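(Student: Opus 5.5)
The plan is to prove directly that \(\|\cdot\|_{\operatorname{Ces}_p(T)}\) is a norm and that the space is complete, rather than relying only on \cite{KMS}. Write \(\sigma_n(f)=\frac{1}{n+1}\sum_{r=0}^{n}A_r(f)\); then \(\|f\|_{\operatorname{Ces}_p(T)}=\|(\sigma_n(f))_n\|_{\ell^p}\).

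\emph{Norm axioms.} Each \(A_r\) is a seminorm on functions \(V\to\mathbb C\), because \(D_r\) is a finite set (local finiteness). It follows that each \(\sigma_n\) is a seminorm and that \(\sigma_n(f)\ge0\). Sublinearity of \(f\mapsto(\sigma_n(f))_n\) combined with monotonicity and Minkowski's inequality in \(\ell^p\) gives the triangle inequality. Homogeneity is immediate. For definiteness, suppose \(\|f\|=0\). Then \(\sigma_n(f)=0\) for all \(n\), so all \(A_r(f)=0\), hence \(f(v)=0\) at every vertex. In particular \(\operatorname{Ces}_p(T)\) is a vector space.

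\emph{Coordinate control.} This is the key estimate. For \(v\in D_r\) and any \(n\ge r\) we have
\[
|f(v)|\le c_rA_r(f)\le c_r(n+1)\sigma_n(f),
\]
and in particular \(|f(v)|\le c_r(r+1)\sigma_r(f)\le c_r(r+1)\|f\|\). So point evaluations are continuous.

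\emph{Completeness.} Let \((f_k)\) be Cauchy in \(\operatorname{Ces}_p(T)\). By the coordinate control, \((f_k(v))_k\) is Cauchy for each \(v\), so it converges to some \(f(v)\). Fix \(\varepsilon>0\) and choose \(K\) such that \(\|f_k-f_m\|<\varepsilon\) for \(k,m\ge K\). For each \(N\),
\[
\sum_{n=0}^{N}\sigma_n(f_k-f_m)^p<\varepsilon^p .
\]
Each \(\sigma_n\) involves only finitely many vertices, so \(\sigma_n(f_k-f_m)\to\sigma_n(f_k-f)\) as \(m\to\infty\). Letting \(m\to\infty\) and then \(N\to\infty\) yields \(\|f_k-f\|\le\varepsilon\) for \(k\ge K\). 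Therefore \(f=f_k-(f_k-f)\in\operatorname{Ces}_p(T)\) and \(f_k\to f\) in norm.

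The only delicate point is passing to the limit inside the norm. This is handled by the truncation to finitely many levels, which works because each \(D_r\) is finite. That is exactly where local finiteness of \(T\) is used, since \(A_r\) would not otherwise be defined.
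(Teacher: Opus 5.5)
Your proof is correct, and it takes a genuinely different route from the paper. The paper gives no argument of its own for this lemma and simply invokes \cite[Theorem~1(c)]{KMS}. You instead verify the statement from scratch:
\begin{itemize}
\item the norm axioms, via the seminorms $\sigma_n$, monotonicity of the $\ell^p$-norm on nonnegative sequences, and Minkowski's inequality;
\item continuity of point evaluations;
\item completeness, via a Fatou-type truncation argument.
\end{itemize}
The truncation step only needs each $\sigma_n$ to depend on the finitely many vertices in $D_0\cup\cdots\cup D_n$, so that it passes to pointwise limits.

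What your route buys is self-containment. It also shows exactly where local finiteness enters, namely that each level $D_r$ is finite. Nonemptiness of the levels, which you use implicitly so that $c_r\ge 1$, is automatic for an infinite, locally finite, rooted (or leafless) tree.

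Your coordinate bound $|f(v)|\le c_r(r+1)\|f\|_{\operatorname{Ces}_p(T)}$ uses only the single term $n=r$ of the norm. It is therefore cruder than the paper's later estimate $c_{|v|}\Phi_p(|v|)\|f\|_{\operatorname{Ces}_p(T)}$ in Lemma~\ref{lem:cesaro-point}, which uses all terms $n\ge r$ and satisfies $\Phi_p(r)\le r+1$. Your weaker bound is nonetheless all that completeness requires.

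The paper's citation is shorter and places the space within the framework of \cite{KMS}. The price is that the reader must consult that reference to see why the norm axioms and completeness hold.
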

\subsection{Weighted backward shifts on Ces\`aro spaces}
Let \(\lambda=(\lambda_u)_{u\in V\setminus\{o\}}\) be a family of
complex weights, where \(\lambda_u\) is assigned to the edge joining
\(\operatorname{par}(u)\) to \(u\). Following \cite[Section~4]{KGD}, if
\(u\in\operatorname{Chi}^n(v)\) and
\[
v=v_0,v_1,\ldots,v_n=u
\]
is the unique path from \(v\) to \(u\), set
\[
\lambda(v\to u)
=
\prod_{j=1}^{n}\lambda_{v_j}.
\]
For \(n=0\), the product is empty and is understood to be \(1\);
thus \(\lambda(v\to v)=1\).
The associated weighted backward shift is defined by
\[
B_\lambda f(v)
=
\sum_{u\in\operatorname{Chi}(v)}
\lambda_u f(u),
\qquad v\in V.
\]
The sum is finite since \(T\) is locally finite.

\begin{lem}
	\label{lem:cesaro-point}
	Let \(1<p<\infty\). For \(N\in\mathbb N_0\), define
	\begin{equation*}
		\Phi_p(N)
		=
		\biggl(
		\sum_{s=N}^{\infty}\frac{1}{(s+1)^p}
		\biggr)^{-1/p}.
	\end{equation*}
	Then, for every \(f\in\operatorname{Ces}_p(T)\) and
	\(N\in\mathbb N_0\),
\begin{equation}\label{eq:cesaro-level-estimate}
	A_N(f)
	\leq
	\Phi_p(N)\|f\|_{\operatorname{Ces}_p(T)}.
\end{equation}
	Consequently, for every \(v\in V\),
\begin{equation}\label{eq:cesaro-point-estimate}
	|f(v)|
	\leq
	c_{|v|}\Phi_p(|v|)
	\|f\|_{\operatorname{Ces}_p(T)}.
\end{equation}
\end{lem}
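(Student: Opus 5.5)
The plan is to bound \(A_N(f)\) from below by the Cesàro averages at all levels \(s\ge N\), and then sum the \(p\)-th powers of those bounds. Since every \(A_r(f)\) is nonnegative, for every \(s\ge N\) we may drop all terms except the one with \(r=N\):
\[
\frac{1}{s+1}\sum_{r=0}^{s}A_r(f)\ \ge\ \frac{A_N(f)}{s+1}.
\]

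Raising this to the \(p\)-th power and summing over \(s\ge N\) gives
\[
A_N(f)^p\sum_{s=N}^{\infty}\frac{1}{(s+1)^p}
\ \le\ \sum_{s=N}^{\infty}\biggl(\frac{1}{s+1}\sum_{r=0}^{s}A_r(f)\biggr)^p
\ \le\ \|f\|_{\operatorname{Ces}_p(T)}^p.
\]
The series \(\sum_{s\ge N}(s+1)^{-p}\) converges because \(p>1\), and it is strictly positive. Hence \(\Phi_p(N)\) is a well-defined positive finite number. Taking \(p\)-th roots and dividing by that series yields \eqref{eq:cesaro-level-estimate}.

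For \eqref{eq:cesaro-point-estimate}, fix \(v\in V\) and set \(N=|v|\). Since \(v\in D_N\) and all summands in \(A_N(f)\) are nonnegative,
\[
|f(v)|\ \le\ \sum_{u\in D_N}|f(u)| \ =\ c_N A_N(f).
\]
Combining this with \eqref{eq:cesaro-level-estimate} gives the pointwise estimate.

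I do not expect any real obstacle. The only points needing care are that \(c_N\) is finite, which holds because \(T\) is locally finite so each level is a finite set, and that the series defining \(\Phi_p(N)\) converges, which uses \(p>1\).
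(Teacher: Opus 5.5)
Your proof is correct and matches the paper's argument: keep only the \(r=N\) term in each Ces\`aro average for levels \(\geq N\), sum the \(p\)-th powers, then use \(|f(v)|\leq c_{|v|}A_{|v|}(f)\). Your side remarks on the finiteness of \(c_N\) and on the convergence of \(\sum_{s\geq N}(s+1)^{-p}\) for \(p>1\) are minor additions that the paper leaves implicit.
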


\begin{proof}
	For \(f\in\operatorname{Ces}_p(T)\) and \(N\in\mathbb N_0\),
	\[
	\|f\|_{\operatorname{Ces}_p(T)}^p
	\geq
	\sum_{n=N}^{\infty}
	\left(
	\frac{A_N(f)}{n+1}
	\right)^p
	=
	A_N(f)^p\Phi_p(N)^{-p}.
	\]
	Hence
	\[
	A_N(f)
	\leq
	\Phi_p(N)\|f\|_{\operatorname{Ces}_p(T)}.
	\]
	
	Now fix \(v\in V\) and put \(r=|v|\). Since
	\[
	|f(v)|
	\leq
	\sum_{u\in D_r}|f(u)|
	=
	c_rA_r(f),
	\]
	the preceding estimate gives
	\[
	|f(v)|
	\leq
	c_r\Phi_p(r)\|f\|_{\operatorname{Ces}_p(T)}
	=
	c_{|v|}\Phi_p(|v|)
	\|f\|_{\operatorname{Ces}_p(T)}.
	\]
\end{proof}

\begin{lem}\label{lem2.3}
	Let \(1<p<\infty\). Then, for every \(m\in\mathbb N\),
	\[
	(p-1)^{1/p}m^{(p-1)/p}
	\leq
	\Phi_p(m)
	\leq
	(p-1)^{1/p}(m+1)^{(p-1)/p}.
	\]
	In particular,
	\[
	\Phi_p(m)\longrightarrow\infty
	\qquad\text{as }m\longrightarrow\infty.
	\]
\end{lem}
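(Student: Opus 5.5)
We compare the tail sum \(\sum_{s=m}^{\infty}(s+1)^{-p}\) with integrals, exploiting the monotonicity of \(t\mapsto t^{-p}\). Write \(S_m=\sum_{s=m}^{\infty}(s+1)^{-p}=\sum_{k=m+1}^{\infty}k^{-p}\), so that \(\Phi_p(m)=S_m^{-1/p}\). Since \(t^{-p}\) is decreasing on \((0,\infty)\), for each \(k\geq 1\) we have
\[
\int_{k}^{k+1}t^{-p}\,dt\leq k^{-p}\leq\int_{k-1}^{k}t^{-p}\,dt .
\]
Summing over \(k\geq m+1\) gives
\[
\frac{(m+1)^{1-p}}{p-1}=\int_{m+1}^{\infty}t^{-p}\,dt\leq S_m\leq\int_{m}^{\infty}t^{-p}\,dt=\frac{m^{1-p}}{p-1}.
\]
The right-hand integral is finite precisely because \(m\geq1\) and \(p>1\). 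This is where the hypothesis \(m\in\mathbb N\), rather than \(m=0\), is used.

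Next we raise these bounds to the power \(-1/p\), which reverses the inequalities. This yields
\[
(p-1)^{1/p}m^{(p-1)/p}\leq\Phi_p(m)\leq(p-1)^{1/p}(m+1)^{(p-1)/p},
\]
which is exactly the claimed two-sided estimate.

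Finally, because \((p-1)/p>0\) and \((p-1)^{1/p}>0\), the lower bound tends to infinity as \(m\to\infty\), and hence so does \(\Phi_p(m)\).

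There is no real obstacle here. The only points needing care are the index shift \(k=s+1\) and keeping the direction of each inequality straight when inverting and taking the \(-1/p\) power.
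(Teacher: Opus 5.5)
Your proposal is correct and is the same argument as the paper's: integral comparison gives $\frac{(m+1)^{1-p}}{p-1}\le\sum_{s=m}^{\infty}(s+1)^{-p}\le\frac{m^{1-p}}{p-1}$, and raising to the power $-1/p$ gives the two-sided estimate. Your version just writes out the index shift and the termwise integral bounds that the paper leaves implicit.
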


\begin{proof}
	The integral comparison test gives
	\[
	\frac{1}{(p-1)(m+1)^{p-1}}
	\leq
	\sum_{s=m}^{\infty}\frac{1}{(s+1)^p}
	\leq
	\frac{1}{(p-1)m^{p-1}}.
	\]
	Taking the power \(-1/p\) yields the desired estimates.
\end{proof}

\begin{rem}
	For \(v\in V\), let \(e_v\) denote the standard unit vector defined by
	\[
	e_v(u)
	=
	\begin{cases}
		1, & u=v,\\
		0, & u\neq v.
	\end{cases}
	\]
	If \(r=|v|\), then
	\[
	A_r(e_v)=\frac{1}{c_r}
	\qquad\text{and}\qquad
	A_m(e_v)=0\quad(m\neq r).
	\]
	Therefore,
	\[
	\|e_v\|_{\operatorname{Ces}_p(T)}
	=
	\frac{1}{c_{|v|}\Phi_p(|v|)}.
	\]
	We denote by
	\[
	c_{00}(T)
	=
	\left\{
	f:V\to\mathbb C:
	\operatorname{supp}(f)\text{ is finite}
	\right\}
	=
	\operatorname{span}\{e_v:v\in V\}
	\]
	the space of all finitely supported functions on \(T\), where
	\[
	\operatorname{supp}(f)
	=
	\{v\in V:f(v)\neq0\}.
	\]
\end{rem}

\begin{prop}\label{prop2.5}
	Let \(1<p<\infty\). Then \(c_{00}(T)\) is dense in
	\(\operatorname{Ces}_p(T)\). In particular,
	\(\operatorname{Ces}_p(T)\) is separable.
\end{prop}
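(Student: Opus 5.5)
The natural approach is truncation by levels. For \(f\in\operatorname{Ces}_p(T)\) and \(N\in\mathbb N_0\), let
\[
f_N=f\cdot\mathbf 1_{\{|v|\le N\}} .
\]
Since \(T\) is locally finite, each \(D_n\) is finite, so \(f_N\in c_{00}(T)\). It then suffices to show
\[
\|f-f_N\|_{\operatorname{Ces}_p(T)}\to 0 \qquad (N\to\infty).
\]

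The first step is to compute the levelwise averages of the remainder. We have \(A_r(f-f_N)=0\) for \(r\le N\) and \(A_r(f-f_N)=A_r(f)\) for \(r>N\). Hence
\[
\|f-f_N\|_{\operatorname{Ces}_p(T)}^p=\sum_{n=N+1}^{\infty}\biggl(\frac{1}{n+1}\sum_{r=N+1}^{n}A_r(f)\biggr)^p .
\]
Each term of this sum is bounded by \(\bigl(\frac{1}{n+1}\sum_{r=0}^{n}A_r(f)\bigr)^p\), because all the \(A_r(f)\) are nonnegative. So the remainder is dominated by the tail
\[
\sum_{n>N}\biggl(\frac{1}{n+1}\sum_{r=0}^{n}A_r(f)\biggr)^p
\]
of the convergent series defining \(\|f\|_{\operatorname{Ces}_p(T)}^p\), and this tail tends to \(0\). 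This gives density of \(c_{00}(T)\).

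The only point needing care is this monotonicity: dropping the first \(N+1\) averages can only decrease each Cesàro mean. It holds because the \(A_r\) are defined through absolute values. No Hardy inequality is needed, since we never compare \(\operatorname{Ces}_p(T)\) with \(\ell^p\).

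For separability, let \(\mathcal D\) be the set of finite linear combinations of the \(e_v\) with coefficients in \(\mathbb Q+i\mathbb Q\). Since \(V\) is countable, \(\mathcal D\) is countable. Each \(g\in c_{00}(T)\) is approximated by elements of \(\mathcal D\) by approximating its finitely many coefficients, using
\[
\|e_v\|_{\operatorname{Ces}_p(T)}=\frac{1}{c_{|v|}\Phi_p(|v|)}<\infty
\]
and the triangle inequality. Combined with the density of \(c_{00}(T)\), this shows \(\mathcal D\) is dense, so \(\operatorname{Ces}_p(T)\) is separable.
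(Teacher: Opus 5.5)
Your proposal is correct and follows the paper's proof. It uses the same level truncation $f_N$, dominates each Ces\`aro mean of $f-f_N$ by the corresponding mean of $f$, and approximates elements of $c_{00}(T)$ by $\mathbb{Q}(\mathrm{i})$-combinations of the $e_v$ for separability. The only difference is that the paper invokes the dominated convergence theorem, whereas you note that the terms with $n\le N$ vanish and bound the rest by the tail $\sum_{n>N}\bigl(\frac{1}{n+1}\sum_{r=0}^{n}A_r(f)\bigr)^p$, which is a more explicit form of the same argument.
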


\begin{proof}
	Let \(f\in\operatorname{Ces}_p(T)\). For \(N\in\mathbb N_0\), define
	\[
	f_N(v)
	=
	\begin{cases}
		f(v), & |v|\leq N,\\
		0, & |v|>N,
	\end{cases}
	\qquad v\in V,
	\]
	and put \(h_N=f-f_N\). Since \(T\) is locally finite, each \(D_n\)
	is finite, and hence \(f_N\in c_{00}(T)\).
	
	For every fixed \(n\in\mathbb N_0\),
	\[
	\frac{1}{n+1}\sum_{r=0}^{n}A_r(h_N)
	\longrightarrow0,
	\qquad N\to\infty.
	\]
	Moreover,
	\[
	0
	\leq
	\biggl(
	\frac{1}{n+1}\sum_{r=0}^{n}A_r(h_N)
	\biggr)^p
	\leq
	\biggl(
	\frac{1}{n+1}\sum_{r=0}^{n}A_r(f)
	\biggr)^p.
	\]
	Since
	\[
	\sum_{n=0}^{\infty}
	\biggl(
	\frac{1}{n+1}\sum_{r=0}^{n}A_r(f)
	\biggr)^p
	=
	\|f\|_{\operatorname{Ces}_p(T)}^p
	<\infty,
	\]
	the dominated convergence theorem yields
	\[
	\|f-f_N\|_{\operatorname{Ces}_p(T)}
	\longrightarrow0,
	\qquad N\to\infty.
	\]
	Therefore, \(c_{00}(T)\) is dense in
	\(\operatorname{Ces}_p(T)\).
	
	Let
	\[
	\mathbb{Q}(\mathrm{i})
	=
	\{a+b\mathrm{i}:a,b\in\mathbb{Q}\}
	\]
	and define
	\[
	\mathcal D
	=
	\Biggl\{
	\sum_{j=1}^{m}q_je_{v_j}:
	m\in\mathbb N,\ 
	q_j\in\mathbb{Q}(\mathrm{i}),\
	v_j\in V
	\Biggr\}.
	\]
	Since \(T\) is rooted and locally finite, \(V\) is countable, and hence
	\(\mathcal D\) is countable.
	
	Now let
	\[
	g=\sum_{j=1}^{m}a_je_{v_j}\in c_{00}(T)
	\]
	and let \(\varepsilon>0\). Choose \(q_j\in\mathbb{Q}(\mathrm{i})\)
	such that
	\[
	|a_j-q_j|
	<
	\frac{\varepsilon}
	{m\bigl(1+\|e_{v_j}\|_{\operatorname{Ces}_p(T)}\bigr)},
	\qquad 1\leq j\leq m.
	\]
	Then, for
	\[
	q=\sum_{j=1}^{m}q_je_{v_j}\in\mathcal D,
	\]
	we have
	\[
	\|g-q\|_{\operatorname{Ces}_p(T)}
	\leq
	\sum_{j=1}^{m}
	|a_j-q_j|
	\|e_{v_j}\|_{\operatorname{Ces}_p(T)}
	<
	\varepsilon.
	\]
	Thus \(\mathcal D\) is dense in \(c_{00}(T)\). Since
	\(c_{00}(T)\) is dense in \(\operatorname{Ces}_p(T)\),
	the countable set \(\mathcal D\) is dense in
	\(\operatorname{Ces}_p(T)\). Hence
	\(\operatorname{Ces}_p(T)\) is separable.
\end{proof}
\begin{lem}\label{lem:cesaro-level-decay}
	For every \(h\in\operatorname{Ces}_p(T)\),
	\begin{equation}\label{eq:cesaro-level-decay}
		\frac{A_m(h)}{\Phi_p(m)}
		\longrightarrow0
		\qquad\text{as }m\to\infty.
	\end{equation}
\end{lem}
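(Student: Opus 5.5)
The plan is to refine the argument of Lemma~\ref{lem:cesaro-point} by looking only at the tail of the Ces\`aro sum, combined with the density of \(c_{00}(T)\) from Proposition~\ref{prop2.5}. Write \(S_n(h)=\frac{1}{n+1}\sum_{r=0}^{n}A_r(h)\). For a fixed \(m\), every \(n\geq m\) satisfies \(S_n(h)\geq A_m(h)/(n+1)\), since all the \(A_r(h)\) are nonnegative. Hence
\[
A_m(h)^p\,\Phi_p(m)^{-p}
=
\sum_{n=m}^{\infty}\biggl(\frac{A_m(h)}{n+1}\biggr)^p
\leq
\sum_{n=m}^{\infty}S_n(h)^p .
\]
The right-hand side is the tail of the convergent series defining \(\|h\|_{\operatorname{Ces}_p(T)}^p\), so it tends to \(0\) as \(m\to\infty\). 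Taking \(p\)-th roots gives \(A_m(h)/\Phi_p(m)\to0\), which is \eqref{eq:cesaro-level-decay}.

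This already proves the lemma, and nothing beyond the definition of the norm is needed. The only point to check is the identity \(\sum_{n\geq m}(n+1)^{-p}=\Phi_p(m)^{-p}\), which is exactly the definition of \(\Phi_p\). So the proof is essentially the computation in Lemma~\ref{lem:cesaro-point}, with the full norm replaced by its tail.

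Should one want an alternative route, there is one. By Proposition~\ref{prop2.5}, take \(g\in c_{00}(T)\) with \(\|h-g\|<\varepsilon\). Then \(A_m(g)=0\) for all large \(m\), and \(A_m\) is subadditive. Applying \eqref{eq:cesaro-level-estimate} to \(h-g\) gives \(\limsup_m A_m(h)/\Phi_p(m)\leq\varepsilon\). I would use the tail argument, however, since it is more direct. I do not expect any real obstacle. The key observation is simply that the bound on the \(m\)-th level average involves only the terms of the norm series with index \(n\geq m\).
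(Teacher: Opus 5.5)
Your proof is correct, but your primary route differs from the paper's. The paper argues by approximation: it picks \(q\in c_{00}(T)\) with \(\|h-q\|_{\operatorname{Ces}_p(T)}<\varepsilon\), notes that \(A_m(h)=A_m(h-q)\) once \(m\) exceeds the levels meeting \(\operatorname{supp}(q)\), and applies \eqnref{eq:cesaro-level-estimate} to \(h-q\). That is exactly the ``alternative route'' you sketch.

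Your tail argument instead reruns the computation of Lemma~\numref{lem:cesaro-point}, keeping only the terms with \(n\geq m\). This gives the explicit bound
\[
\frac{A_m(h)}{\Phi_p(m)}
\leq
\biggl(\sum_{n=m}^{\infty}\biggl(\frac{1}{n+1}\sum_{r=0}^{n}A_r(h)\biggr)^p\biggr)^{1/p}.
\]
Your route gains two things. It is self-contained: the lemma no longer depends on Proposition~\numref{prop2.5}, whose proof needs a dominated-convergence argument, whereas you only need that the tails of a convergent series vanish. It is also quantitative: the decay rate is controlled by the tail of the norm series of \(h\) itself.

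The paper's route is more structural, since it reuses the density result and the level estimate already established. It would carry over verbatim to any setting with a dense set of finitely supported vectors and a uniform level estimate. However, it gives no rate of decay.
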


\begin{proof}
	Let \(\varepsilon>0\). By Proposition~\numref{prop2.5}, there exists
	\(q\in c_{00}(T)\) such that
	\[
	\|h-q\|_{\operatorname{Ces}_p(T)}<\varepsilon.
	\]
	Since \(q\) has finite support, there exists \(m_0\in\mathbb N_0\)
	such that
	\[
	\operatorname{supp}(q)
	\subseteq
	\bigcup_{r=0}^{m_0}D_r.
	\]
	Thus \(A_m(q)=0\) for every \(m>m_0\), and consequently
	\(A_m(h)=A_m(h-q)\). By Lemma~\numref{lem:cesaro-point},
	specifically \eqnref{eq:cesaro-level-estimate},
	\[
	\frac{A_m(h)}{\Phi_p(m)}
	=
	\frac{A_m(h-q)}{\Phi_p(m)}
	\leq
	\|h-q\|_{\operatorname{Ces}_p(T)}
	<
	\varepsilon,
	\qquad m>m_0.
	\]
	Since \(\varepsilon>0\) is arbitrary, the assertion follows.
\end{proof}
\begin{thm}
	\label{thm:boundedness}
	Let \(1<p<\infty\), and let
	\(\lambda=(\lambda_u)_{u\in V\setminus\{o\}}\) be a family of complex
	weights. Then \(B_\lambda\) is bounded on
	\(\operatorname{Ces}_p(T)\) if and only if
	\[
	L_\lambda
	:=
	\sup_{r\geq0}
	\frac{c_{r+1}}{c_r}
	\max_{u\in D_{r+1}}|\lambda_u|
	<\infty.
	\]
	Moreover,
	\[
	L_\lambda
	\leq
	\|B_\lambda\|
	\leq
	2L_\lambda.
	\]
\end{thm}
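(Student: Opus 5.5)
The plan is to prove the two inequalities \(L_\lambda\le\|B_\lambda\|\) and \(\|B_\lambda\|\le 2L_\lambda\) separately. Together they give the equivalence. Note that \(B_\lambda f\) is defined pointwise for every \(f:V\to\mathbb C\), by local finiteness. So boundedness means exactly that \(B_\lambda\) maps \(\operatorname{Ces}_p(T)\) into itself with \(\|B_\lambda f\|\le C\|f\|\).

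\emph{Lower bound.} I would test on unit vectors. Fix \(r\ge0\) and \(u\in D_{r+1}\). Then \(B_\lambda e_u=\lambda_u e_{\operatorname{par}(u)}\) with \(\operatorname{par}(u)\in D_r\). By the formula \(\|e_v\|=1/(c_{|v|}\Phi_p(|v|))\) from the Remark,
\[
\frac{\|B_\lambda e_u\|}{\|e_u\|}
=|\lambda_u|\,\frac{c_{r+1}\Phi_p(r+1)}{c_r\Phi_p(r)}
\ge |\lambda_u|\,\frac{c_{r+1}}{c_r}.
\]
The last inequality holds because \(\Phi_p\) is nondecreasing: the tail sum \(\sum_{s\ge N}(s+1)^{-p}\) decreases in \(N\). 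Taking the supremum over \(r\) and \(u\) gives \(L_\lambda\le\|B_\lambda\|\). In particular, if \(B_\lambda\) is bounded then \(L_\lambda<\infty\).

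\emph{Upper bound.} Assume \(L_\lambda<\infty\) and let \(f\in\operatorname{Ces}_p(T)\). The first step is a levelwise estimate. Since the sets \(\operatorname{Chi}(v)\), \(v\in D_r\), partition \(D_{r+1}\),
\[
A_r(B_\lambda f)\le\frac{1}{c_r}\sum_{v\in D_r}\sum_{u\in\operatorname{Chi}(v)}|\lambda_u||f(u)|
\le \frac{c_{r+1}}{c_r}\max_{u\in D_{r+1}}|\lambda_u|\,A_{r+1}(f)\le L_\lambda A_{r+1}(f).
\]
The second step handles the index shift in the Cesàro means. Write \(M_n(f)=\frac1{n+1}\sum_{r=0}^nA_r(f)\). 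Then
\[
\frac1{n+1}\sum_{r=0}^{n}A_{r+1}(f)\le\frac{1}{n+1}\sum_{r=0}^{n+1}A_r(f)=\frac{n+2}{n+1}M_{n+1}(f)\le 2M_{n+1}(f).
\]
Hence \(M_n(B_\lambda f)\le 2L_\lambda M_{n+1}(f)\). Raising to the \(p\)-th power and summing over \(n\ge0\) gives
\[
\|B_\lambda f\|^p\le (2L_\lambda)^p\sum_{n\ge1}M_n(f)^p\le(2L_\lambda)^p\|f\|^p .
\]
This shows at once that \(B_\lambda f\in\operatorname{Ces}_p(T)\) and that \(\|B_\lambda\|\le 2L_\lambda\).

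The only delicate point is the shift step, which is the source of the factor \(2\). The key is to bound the shifted average by the next Cesàro mean, dropping nothing harmful, rather than trying a Hardy-type inequality. Everything else is direct computation.
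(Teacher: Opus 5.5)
Your proof is correct and follows the paper's argument essentially step for step. It uses the levelwise bound \(A_r(B_\lambda f)\le L_\lambda A_{r+1}(f)\) and the index-shift estimate by \(2M_{n+1}(f)\) to get \(\|B_\lambda\|\le 2L_\lambda\), and the lower bound from \(B_\lambda e_u=\lambda_u e_{\operatorname{par}(u)}\) together with the monotonicity of \(\Phi_p\), which you justify explicitly while the paper uses it tacitly.
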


\begin{proof}
	Suppose first that \(L_\lambda<\infty\). For
	\(f\in\operatorname{Ces}_p(T)\) and \(r\in\mathbb N_0\),
	\[
	\begin{aligned}
		A_r(B_\lambda f)
		&\leq
		\frac{1}{c_r}
		\max_{u\in D_{r+1}}|\lambda_u|
		\sum_{u\in D_{r+1}}|f(u)| \\
		&=
		\frac{c_{r+1}}{c_r}
		\max_{u\in D_{r+1}}|\lambda_u|A_{r+1}(f) \\
		&\leq
		L_\lambda A_{r+1}(f).
	\end{aligned}
	\]
	Since
	\[
	\frac{1}{n+1}\sum_{r=0}^{n}A_{r+1}(f)
	\leq
	\frac{2}{n+2}\sum_{r=0}^{n+1}A_r(f),
	\]
	we obtain
	\[
	\begin{aligned}
		\|B_\lambda f\|_{\operatorname{Ces}_p(T)}^p
		&\leq
		L_\lambda^p
		\sum_{n=0}^{\infty}
		\biggl(
		\frac{1}{n+1}
		\sum_{r=0}^{n}A_{r+1}(f)
		\biggr)^p \\
		&\leq
		(2L_\lambda)^p
		\sum_{n=0}^{\infty}
		\biggl(
		\frac{1}{n+2}
		\sum_{r=0}^{n+1}A_r(f)
		\biggr)^p \\
		&\leq
		(2L_\lambda)^p
		\|f\|_{\operatorname{Ces}_p(T)}^p.
	\end{aligned}
	\]
	Thus \(B_\lambda\) is bounded and
	\[
	\|B_\lambda\|\leq2L_\lambda.
	\]
	
	Conversely, suppose that \(B_\lambda\) is bounded. Fix
	\(r\in\mathbb N_0\) and \(u\in D_{r+1}\). Since
	\[
	B_\lambda e_u=\lambda_ue_{\operatorname{par}(u)},
	\]
	the norm formula for the standard unit vectors gives
	\[
	\begin{aligned}
		\|B_\lambda\|
		\geq
		\frac{\|B_\lambda e_u\|_{\operatorname{Ces}_p(T)}}
		{\|e_u\|_{\operatorname{Ces}_p(T)}} =
		\frac{c_{r+1}}{c_r}
		|\lambda_u|
		\frac{\Phi_p(r+1)}{\Phi_p(r)}
		\geq
		\frac{c_{r+1}}{c_r}|\lambda_u|.
	\end{aligned}
	\]
	Taking the maximum over \(u\in D_{r+1}\) and then the supremum over
	\(r\geq0\), we obtain
	\[
L_\lambda\leq\|B_\lambda\|<\infty.
\]
\end{proof}
\section{Dynamical properties of weighted backward shifts}

\subsection{\({\mathcal F}\)-transitive backward shifts}
We now study \(\mathcal F\)-transitivity of weighted backward shifts,
which unifies several classical dynamical properties.
\begin{defn}
	A nonempty family
	\(\mathcal F\subseteq\mathcal P(\mathbb N_0)\) is called a
	\textit{Furstenberg family} if every \(A\in\mathcal F\) is infinite and
	\[
	A\in\mathcal F,\quad A\subseteq B\subseteq\mathbb N_0
	\quad\Longrightarrow\quad
	B\in\mathcal F.
	\]
	Following \cite{KR2}, let \(\widetilde{\mathcal F}\) consist of all
	\(A\subseteq\mathbb N_0\) such that, for every
	\(N\in\mathbb N_0\), there exists \(B\in\mathcal F\) satisfying
	\[
	\bigl(B+[-N,N]\bigr)\cap\mathbb N_0\subseteq A.
	\]
	Clearly, \(\widetilde{\mathcal F}\subseteq\mathcal F\).
\end{defn}
\begin{defn}\label{def:F-dynamics}
	Let \(X\) be a separable Banach space, \(S\in\mathcal L(X)\), and
	let \(\mathcal F\) be a Furstenberg family on \(\mathbb N_0\).
	For nonempty open sets \(U,V\subseteq X\), define
	\[
	N_S(U,V)
	=
	\{n\in\mathbb N_0:S^n(U)\cap V\neq\varnothing\}.
	\]
	The operator \(S\) is called
	\(\mathcal F\)-\textit{transitive} if
	\[
	N_S(U,V)\in\mathcal F
	\]
	for all nonempty open sets \(U,V\subseteq X\).
	
	For \(x\in X\) and a nonempty open set \(U\subseteq X\), define
	\[
	N_S(x,U)
	=
	\{n\in\mathbb N_0:S^nx\in U\}.
	\]
	A vector \(x\in X\) is called
	\(\mathcal F\)-\textit{hypercyclic} for \(S\) if
	\[
	N_S(x,U)\in\mathcal F
	\]
	for every nonempty open set \(U\subseteq X\). The operator \(S\) is
	called \(\mathcal F\)-\textit{hypercyclic} if it admits such a vector.
\end{defn}

\begin{defn}\label{def:frequent-hypercyclicity}
	For \(A\subseteq\mathbb N_0\), define its lower density by
	\[
	\underline d(A)
	=
	\liminf_{N\to\infty}
	\frac{|A\cap\{0,\ldots,N\}|}{N+1}.
	\]
	Let \(X\) be a separable Banach space and \(S\in\mathcal L(X)\).
	A vector \(x\in X\) is called \textit{frequently hypercyclic} for
	\(S\) if
	\[
	\underline d\bigl(N_S(x,U)\bigr)>0
	\]
	for every nonempty open set \(U\subseteq X\). The operator \(S\) is
	called \textit{frequently hypercyclic} if it admits such a vector.
	
	Equivalently, frequent hypercyclicity is
	\(\mathcal F_{\underline d>0}\)-hypercyclicity, where
	\[
	\mathcal F_{\underline d>0}
	=
	\{A\subseteq\mathbb N_0:\underline d(A)>0\}.
	\]
\end{defn}
The following result is due to B\`es, Menet, Peris, and Puig
\cite[Lemma~2.3]{KR2}.

\begin{lem}\label{lem:F-weak-mixing}
	Let \(S\) be a continuous linear operator on a separable metrizable
	topological vector space, and suppose that \(S\) is
	\(\mathcal F\)-transitive. Then the following assertions are equivalent:
	\begin{enumerate}
		\renewcommand{\labelenumi}{\textup{(\roman{enumi})}}
		\item \(S\) is weakly mixing;
		\item \(S\) is \(\widetilde{\mathcal F}\)-transitive.
	\end{enumerate}
\end{lem}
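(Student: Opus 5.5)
The plan is to use the fact that every set in \(\widetilde{\mathcal F}\) contains arbitrarily long intervals, together with the characterization of weak mixing as transitivity of \(S\oplus S\) (equivalently, of every finite direct sum \(S^{\oplus k}\), by Furstenberg's theorem).

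\emph{(ii)\(\Rightarrow\)(i).} Let \(A\in\widetilde{\mathcal F}\) and \(N\in\mathbb N_0\). Then \(A\) contains \((b+[-N,N])\cap\mathbb N_0\) for some \(b\in B\in\mathcal F\). Since \(B\) is infinite, \(b\) can be taken larger than \(N\), so \(A\) is thick. Now fix nonempty open \(U_1,V_1,U_2,V_2\subseteq X\).

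\begin{enumerate}
\item By transitivity choose \(a\in N_S(U_1,U_2)\) and \(b\in N_S(V_1,V_2)\).
\item Put \(U'=U_1\cap S^{-a}U_2\) and \(V'=V_1\cap S^{-b}V_2\). Both are nonempty and open.
\item Choose \(n\ge|a-b|\) with \(n\in N_S(U_1,V')\) and \(n+a-b\in N_S(U',V')\). This is possible because \(N_S(U_1,V')\) and \(N_S(U',V')\) both lie in \(\widetilde{\mathcal F}\). One first finds \(B\in\mathcal F\) with \(B+[-N,N]\) inside \(N_S(U',V')\) for \(N=|a-b|\). Then one intersects with \(N_S(U_1,V')\) by applying \(\widetilde{\mathcal F}\)-transitivity to the shrunken pair \(U_1\cap S^{-(a-b)}(\cdot)\).
\end{enumerate}

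\textbf{Main obstacle in this direction.} This step needs care, since \(\mathcal F\) need not be a filter. If handling the intersection directly fails, I would instead use that \(N_S(U',V')\) is thick. Any \(x\in U'\) with \(S^{m}x\in V'\) gives \(m\in N_S(U_1,V_1)\) and \(m+b-a\in N_S(U_2,V_2)\). Thickness then lets me find \(m\) and \(m'=m+a-b\) in the same interval, which produces a common element of \(N_S(U_1,V_1)\) and \(N_S(U_2,V_2)\). Hence \(S\oplus S\) is transitive, that is, \(S\) is weakly mixing.

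\emph{(i)\(\Rightarrow\)(ii).} Fix nonempty open \(U,V\) and \(N\). The goal is nonempty open sets \(U',V'\) and an integer \(m\) such that
\[
N_S(U',V')+m+[0,2N]\subseteq N_S(U,V).
\]
\(\mathcal F\)-transitivity then gives \(B=N_S(U',V')+m+N\in\mathcal F\) (after discarding finitely many elements if needed, using upward closure), with \(B+[-N,N]\subseteq N_S(U,V)\).

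\begin{enumerate}
\item Since \(S\) is transitive it has dense range, so \(S^{-j}V\neq\varnothing\) is open for every \(j\).
\item Since \(S^{\oplus(2N+1)}\) is transitive, there is \(m\) with \(U\cap S^{-(m+j)}V\neq\varnothing\) for all \(0\le j\le 2N\).
\item Pick points \(x_j\in U\cap S^{-(m+j)}V\).
\item Use linearity to glue these into a single pair \(U',V'\). Take \(U'\) a small ball around \(0\) and \(V'\) a small ball around \(0\), with the requirement that \(x_j+U'\subseteq U\) and \(S^{m+j}x_j+S^{m+j}(\cdot)\) lands in \(V\).
\end{enumerate}

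\textbf{Main obstacle in this direction.} The gluing in the last step is the crux, and I expect it to be the hardest part. One needs a single vector whose iterates \(S^{n+m+j}\), \(0\le j\le 2N\), simultaneously hit \(V\). I would first try to obtain it from \(\mathcal F\)-transitivity of \(S\) applied to the product-type neighbourhoods furnished by weak mixing. If that does not close, I would follow the argument of \cite[Lemma~2.3]{KR2} directly.
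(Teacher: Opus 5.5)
\textbf{Direction (ii)$\Rightarrow$(i).} This direction works via your fallback. Every member of $\widetilde{\mathcal F}$ is thick, so $N_S(U',V')$ contains an interval of length $|a-b|+1$. That interval gives an $m$ with $m\in N_S(U_1,V_1)\cap N_S(U_2,V_2)$, so $S\oplus S$ is transitive. Hypercyclicity of $S\oplus S$, which is how the paper defines weak mixing, then follows from Birkhoff's theorem on complete spaces such as $\operatorname{Ces}_p(T)$. The first attempt in your step~3, which tries to intersect two members of $\widetilde{\mathcal F}$, is unjustified and should be dropped.

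\textbf{Direction (i)$\Rightarrow$(ii): the gap.} You leave this direction open, and the mechanism you sketch cannot work. The paper itself gives no argument here, only the citation \cite{KR2}. Take $U',V'$ small balls at $0$, $x\in U'$ and $S^nx\in V'$. Then $x_j+x\in U$, but $S^{n+m+j}(x_j+x)=S^{n+m+j}x_j+S^{m+j}S^nx$, and nothing controls $S^{n+m+j}x_j$. You only know $S^{m+j}x_j\in V$ at the \emph{fixed} time $m+j$. Adding a small vector cannot move this hitting time to the variable time $n+m+j$.

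Your reduction step is also flawed. A Furstenberg family is closed under supersets but not under translation, so $N_S(U',V')+m+N$ need not lie in $\mathcal F$. For example, take the family of sets containing infinitely many even integers: $2\mathbb N_0$ belongs to it, but $2\mathbb N_0+1$ does not. Discarding finitely many elements also goes against upward closure rather than using it. You must take $B=N_S(U',V')$ itself and show $(B+[-N,N])\cap\mathbb N_0\subseteq N_S(U,V)$. That means the shifts must be centred at $0$, negative ones included.

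\textbf{The missing idea.} Produce the different vectors needed for different shifts $j$ along the orbit of a single vector, not by translation. You only need transitivity of $S\oplus S$ and density of $S^k(X)$, which makes $S^{-k}U$ and $S^{-k}V$ nonempty and open.

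Choose recursively $s_j,t_j\geq0$ for $|j|\leq N$, with $s_0=t_0=0$ and $t_j-s_j=j$, so that $U'=\bigcap_{|j|\le N}S^{-s_j}U$ and $V'=\bigcap_{|j|\le N}S^{-t_j}V$ are nonempty. Let $U_{\mathrm{pr}}$ and $V_{\mathrm{pr}}$ be the intersections built so far.
\begin{itemize}
\item If $j>0$, take $s_j\in N_{S\oplus S}(U_{\mathrm{pr}}\times V_{\mathrm{pr}},U\times S^{-j}V)$ and set $t_j=s_j+j$.
\item If $j<0$, take $t_j\in N_{S\oplus S}(U_{\mathrm{pr}}\times V_{\mathrm{pr}},S^{-|j|}U\times V)$ and set $s_j=t_j+|j|$.
\end{itemize}

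Now let $n\in N_S(U',V')$, with $x\in U'$ and $S^nx\in V'$. Then $S^{s_j}x\in U$, and $S^{n+j}(S^{s_j}x)=S^{t_j}(S^nx)\in V$ whenever $n+j\geq0$. Hence $(N_S(U',V')+[-N,N])\cap\mathbb N_0\subseteq N_S(U,V)$. Finally, $N_S(U',V')\in\mathcal F$ by $\mathcal F$-transitivity. This argument needs neither Furstenberg's theorem for $S^{\oplus(2N+1)}$ nor linearity.
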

\begin{thm}
	\label{thm:F-transitivity-growth}
	Let \(1<p<\infty\), let \(T=(V,E)\) be a leafless, locally finite rooted
	tree, and let
	\(\lambda=(\lambda_u)_{u\in V\setminus\{o\}}\) be a family of complex
	weights such that \(B_\lambda\) is bounded on
	\(\operatorname{Ces}_p(T)\). Let \(\mathcal F\) be a Furstenberg
	family on \(\mathbb N_0\). Then the following assertions are equivalent:
	\begin{enumerate}
		\renewcommand{\labelenumi}{\textup{(\roman{enumi})}}
		\item \(B_\lambda\) is \(\widetilde{\mathcal F}\)-transitive on
		\(\operatorname{Ces}_p(T)\);
		\item \(B_\lambda\) is \(\mathcal F\)-transitive on
		\(\operatorname{Ces}_p(T)\);
		\item for every nonempty finite set \(F\subseteq V\) and every
		\(N\in\mathbb N\),
		\[
		\bigcap_{v\in F}
		\left\{
		n\in\mathbb N_0:
		c_{|v|+n}\Phi_p(|v|+n)
		\max_{u\in\operatorname{Chi}^n(v)}
		|\lambda(v\to u)|>N
		\right\}
		\in\mathcal F.
		\]
	\end{enumerate}
\end{thm}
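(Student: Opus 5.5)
We prove (i)$\Rightarrow$(ii)$\Rightarrow$(iii)$\Rightarrow$(i). The first implication is immediate from \(\widetilde{\mathcal F}\subseteq\mathcal F\).

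\emph{(ii)$\Rightarrow$(iii).} Fix a nonempty finite \(F\subseteq V\) and \(N\in\mathbb N\). Set \(m=\max_{v\in F}|v|\). Let \(g=\sum_{v\in F}e_v\), and choose \(\delta>0\) small, to be fixed below. Let \(U\) be the ball of radius \(\delta\) about \(0\) and \(W\) the ball of radius \(\delta\) about \(g\). By (ii) the set \(N_{B_\lambda}(U,W)\) belongs to \(\mathcal F\), so it suffices to show that every \(n\) in it lies in the intersection in (iii), and then use heredity.

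Take \(n\in N_{B_\lambda}(U,W)\) and \(f\in U\) with \(B_\lambda^nf\in W\). For \(v\in F\) we have
\[
B_\lambda^nf(v)=\sum_{u\in\operatorname{Chi}^n(v)}\lambda(v\to u)f(u).
\]
Applying \eqnref{eq:cesaro-point-estimate} to \(B^n_\lambda f-g\) gives \(|B^n_\lambda f(v)-1|\le c_{m}\Phi_p(m)\delta\), where we use that \(c_r\Phi_p(r)\) is monotone in \(r\) (or simply take the maximum over the finitely many levels \(\le m\)). Choosing \(\delta\) small therefore forces \(|B_\lambda^nf(v)|\ge 1/2\).

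On the other hand, \eqnref{eq:cesaro-level-estimate} at level \(|v|+n\) gives
\[
\sum_{u\in \operatorname{Chi}^n(v)}|f(u)|\le c_{|v|+n}A_{|v|+n}(f)\le c_{|v|+n}\Phi_p(|v|+n)\,\delta .
\]
Hence
\[
\tfrac12\le \max_{u\in\operatorname{Chi}^n(v)}|\lambda(v\to u)|\;c_{|v|+n}\Phi_p(|v|+n)\,\delta .
\]
Taking \(\delta<1/(2N)\) yields the strict inequality \(>N\) for every \(v\in F\). Thus \(N_{B_\lambda}(U,W)\) is contained in the intersection, which is therefore in \(\mathcal F\).

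\emph{(iii)$\Rightarrow$(i).} By density of \(c_{00}(T)\) (Proposition~\numref{prop2.5}), it suffices to treat basic open sets: balls of radius \(\varepsilon\) around finitely supported \(x\) and \(y\). Let \(F\) contain \(\operatorname{supp}x\cup\operatorname{supp}y\), and fix \(K\in\mathbb N_0\). Given \(n\) in the set from (iii) for a large \(N\) (chosen later), for each \(v\in F\) pick \(u_v\in\operatorname{Chi}^n(v)\) attaining the maximum, and put
\[
z_n=\sum_{v\in F} \frac{y(v)}{\lambda(v\to u_v)}e_{u_v}.
\]
Then \(B_\lambda^nz_n=y\). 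The vectors \(u_v\) are distinct for distinct \(v\) on the same level; for \(v\) on different levels they could coincide, so we first reduce to \(F\) a full union of levels \(D_0\cup\dots\cup D_m\), where \(u_v\) are distinct because the descendant sets are disjoint within a level. Also \(B_\lambda^nz_n=y\) exactly, since the \(u_v\) with \(|v|\ne r\) lie at different depths. Moreover
\[
\|z_n\|\le\sum_v |y(v)|\,\|e_{u_v}\|=\sum_v\frac{|y(v)|}{|\lambda(v\to u_v)|\,c_{|v|+n}\Phi_p(|v|+n)}<\frac{\sum|y(v)|}{N}.
\]
Take \(x+z_n\) as the starting vector. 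Then \(B_\lambda^n(x+z_n)=B_\lambda^nx+y=y\) as soon as \(n>m\), because \(B^n_\lambda x=0\) for finitely supported \(x\) of height \(\le m\).

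To obtain \(\widetilde{\mathcal F}\), we need this for all \(n'\in[n-K,n+K]\) with \(n\) in a set \(B\in\mathcal F\). Apply (iii) to the enlarged finite set \(F'=\bigcup_{j=0}^{2K}\operatorname{Chi}^{j}(F)\cup F\), and with \(N\) larger. The main obstacle is to convert largeness at time \(n\) for the vertices of \(F'\) into largeness at times \(n\pm j\) for \(F\). For \(n'=n+j\), given \(v\in F\), the maximal weight over \(\operatorname{Chi}^{n+j}(v)\) is at least \(|\lambda(v\to w)|\) times the maximum from \(w\in\operatorname{Chi}^j(v)\); but the weights \(\lambda(v\to w)\) could be zero or tiny. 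The remedy is to use the bound \(|\lambda(v\to w)|\le L_\lambda^{j}\,c_{|v|}/c_{|v|+j}\) from Theorem~\numref{thm:boundedness} in the reverse direction: index via shifts \(n'=n-j\). Concretely, for \(n\in B\) (the set from (iii) for \(F\) at time \(n\)) and \(0\le j\le 2K\), we have
\[
c_{|v|+n}\Phi_p(|v|+n)\max|\lambda(v\to u)|\le L_\lambda^{j}\,\frac{\Phi_p(|v|+n)}{\Phi_p(|v|+n-j)}\;c_{|v|+n-j}\Phi_p(|v|+n-j)\max_{\operatorname{Chi}^{n-j}(v)}|\lambda|,
\]
using \(c_{r+1}|\lambda_u|\le L_\lambda c_r\) along the path together with \(\Phi_p(r)/\Phi_p(r-j)\to1\) (Lemma~\numref{lem2.3}). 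Hence largeness at time \(n\) propagates to \(n-j\), with constant loss \(\approx L_\lambda^{j}\). Taking \(B'=B+K\) (which lies in \(\mathcal F\) as long as \(\mathcal F\) is shift-stable) is delicate; instead, choose \(B\) from (iii) and note that \([n-2K,n]\) has the required form \(B''+[-K,K]\) with \(B''=B-K\). If \(\mathcal F\) is not translation invariant, we fall back on Lemma~\numref{lem:F-weak-mixing}: having (ii), show weak mixing directly by applying (iii)$\Rightarrow$(ii) to \(B_\lambda\oplus B_\lambda\) (i.e.\ with \(F\) replaced by the union of the two supports, which the same construction handles). This gives (i).
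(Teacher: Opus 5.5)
Your (ii)$\Rightarrow$(iii) is the paper's argument. Your fallback for (iii)$\Rightarrow$(i) is also exactly the paper's route: build the right inverse $z_n$ on maximizing descendants, run it for $B_\lambda\oplus B_\lambda$, deduce weak mixing, and apply Lemma~\ref{lem:F-weak-mixing}.

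There is, however, one concrete missing step. You only get $B_\lambda^n(x+z_n)=y$ for $n>m$, where $m$ is the height of $\operatorname{supp}x$. So you have only shown $N_{B_\lambda}(U,V)\supseteq I_{F,N}\cap(m,\infty)$, where $I_{F,N}$ is the set in (iii). For an arbitrary Furstenberg family, removing finitely many integers from a member need not leave a member. Hence this does not give $N_{B_\lambda}(U,V)\in\mathcal F$, which is the input the lemma needs; it only suffices for weak mixing, which needs just infinitely many $n$.

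The paper's fix is a choice of $N$. Write $Q_v(k):=c_{|v|+k}\Phi_p(|v|+k)\max_{u\in\operatorname{Chi}^k(v)}|\lambda(v\to u)|$, and take $N$ larger than the finitely many numbers $Q_v(k)$ with $v\in F$, $0\le k\le m$. Then $I_{F,N}\subseteq(m,\infty)$ automatically, and $I_{F,N}\in\mathcal F$ by (iii).

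Separately, your worry that the $u_v$ might coincide is unfounded. For distinct $v\in F$ the sets $\operatorname{Chi}^n(v)$ are pairwise disjoint: they lie at different depths if the $|v|$ differ, and are disjoint descendant sets otherwise. So no reduction to full levels is needed.

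Your direct attack on $\widetilde{\mathcal F}$ proves nothing as written. The downward estimate $Q_v(n)\le C_j\,Q_v(n-j)$ is correct. But recentring $B+[-2K,0]$ as $(B-K)+[-K,K]$ requires $B-K\in\mathcal F$, which fails for a general family.

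The upward direction you dismissed does work:
\begin{itemize}
\item For $w\in\operatorname{Chi}^j(v)$ one has $Q_v(n+j)\ge|\lambda(v\to w)|\,Q_w(n)$, because $c_{|w|+n}\Phi_p(|w|+n)=c_{|v|+n+j}\Phi_p(|v|+n+j)$.
\item Condition (iii) itself forces some $w_{v,j}\in\operatorname{Chi}^j(v)$ with $\lambda(v\to w_{v,j})\neq0$. Otherwise $Q_v(n)=0$ for all $n\ge j$, and the set in (iii) for $F=\{v\}$ would be finite.
\item Small weights therefore only cost a constant in $N$.
\end{itemize}
Take $F'=F\cup\{w_{v,j}:v\in F,\ 0\le j\le K\}$ and $N'$ large. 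Then $B=I_{F',N'}\in\mathcal F$ satisfies $(B+[-K,K])\cap\mathbb N_0\subseteq I_{F,N}$ with no shift at all. This gives a self-contained proof of (iii)$\Rightarrow$(i) that avoids Lemma~\ref{lem:F-weak-mixing}.

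Either complete this direct route or drop the detour and rely on the fallback. In both cases you need the choice of $N$ described above.
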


\begin{proof}
	We prove
	\[
	\textup{(i)}\Longrightarrow\textup{(ii)}
	\Longrightarrow\textup{(iii)}
	\Longrightarrow\textup{(i)}.
	\]
	
	\noindent
	\(\textup{(i)}\Rightarrow\textup{(ii)}\).
	This follows immediately from
	\(\widetilde{\mathcal F}\subseteq\mathcal F\).
	
	\smallskip
	
	\noindent
	\(\textup{(ii)}\Rightarrow\textup{(iii)}\).
	Let \(F\subseteq V\) be nonempty and finite, and let
	\(N\in\mathbb N\). Set
	\[
	U
	=
	\left\{
	f\in\operatorname{Ces}_p(T):
	\|f\|_{\operatorname{Ces}_p(T)}<\frac{1}{2N}
	\right\}
	\]
	and
	\[
	W
	=
	\left\{
	g\in\operatorname{Ces}_p(T):
	|g(v)-1|<\frac12
	\text{ for every }v\in F
	\right\}.
	\]
	By  \eqnref{eq:cesaro-point-estimate}, \(W\) is open, and it is nonempty
	since \(\sum_{v\in F}e_v\in W\). Hence
	\[
	N_{B_\lambda}(U,W)\in\mathcal F.
	\]
	
	Let \(n\in N_{B_\lambda}(U,W)\). Then there exists \(f\in U\)
	such that \(B_\lambda^nf\in W\). Thus, for every \(v\in F\),
	\[
	\frac12<|B_\lambda^nf(v)|.
	\]
	On the other hand,
	\[
	\begin{aligned}
		|B_\lambda^nf(v)|
		&\leq
		\max_{u\in\operatorname{Chi}^n(v)}
		|\lambda(v\to u)|
		\sum_{u\in\operatorname{Chi}^n(v)}|f(u)| \\
		&\leq
		c_{|v|+n}\Phi_p(|v|+n)
		\max_{u\in\operatorname{Chi}^n(v)}
		|\lambda(v\to u)|
		\|f\|_{\operatorname{Ces}_p(T)}.
	\end{aligned}
	\]
	Since
	\(\|f\|_{\operatorname{Ces}_p(T)}<1/(2N)\), it follows that
	\[
	c_{|v|+n}\Phi_p(|v|+n)
	\max_{u\in\operatorname{Chi}^n(v)}
	|\lambda(v\to u)|>N
	\]
	for every \(v\in F\). Therefore,
	\[
	N_{B_\lambda}(U,W)
	\subseteq
	\bigcap_{v\in F}
	\left\{
	n\in\mathbb N_0:
	c_{|v|+n}\Phi_p(|v|+n)
	\max_{u\in\operatorname{Chi}^n(v)}
	|\lambda(v\to u)|>N
	\right\}.
	\]
	Since \(\mathcal F\) is hereditary upward,
	condition~\textup{(iii)} follows.
	
	\smallskip
	
	\noindent
	\(\textup{(iii)}\Rightarrow\textup{(i)}\).
	We first show that \(B_\lambda\oplus B_\lambda\) is
	\(\mathcal F\)-transitive. Let \(U_1,U_2,V_1,V_2\) be nonempty
	open subsets of \(\operatorname{Ces}_p(T)\). Since \(c_{00}(T)\)
	is dense, choose
	\[
	x_j\in U_j\cap c_{00}(T),
	\qquad
	y_j\in V_j\cap c_{00}(T),
	\qquad j=1,2.
	\]
	Choose \(\varepsilon>0\) such that
	\[
	B(x_j,\varepsilon)\subseteq U_j,
	\qquad j=1,2,
	\]
	and choose \(n_0\in\mathbb N\) so that
	\[
	B_\lambda^nx_j=0,
	\qquad n\geq n_0,\quad j=1,2.
	\]
	
	Set
	\[
	E
	=
	\operatorname{supp}(y_1)
	\cup
	\operatorname{supp}(y_2)
	\cup\{o\},
	\]
	and write
	\[
	y_j=\sum_{v\in E}a_{j,v}e_v,
	\qquad j=1,2.
	\]
	Since \(E\) and \(\{0,\ldots,n_0-1\}\) are finite, choose
	\(N\in\mathbb N\) sufficiently large so that
	\[
	\frac{1}{N}\sum_{v\in E}|a_{j,v}|<\varepsilon,
	\qquad j=1,2,
	\]
	and
	\[
	c_{|v|+k}\Phi_p(|v|+k)
	\max_{u\in\operatorname{Chi}^k(v)}
	|\lambda(v\to u)|
	\leq N
	\]
	for every \(v\in E\) and \(0\leq k<n_0\).
	
	By condition~\textup{(iii)},
	\[
	I
	:=
	\bigcap_{v\in E}
	\left\{
	n\in\mathbb N_0:
	c_{|v|+n}\Phi_p(|v|+n)
	\max_{u\in\operatorname{Chi}^n(v)}
	|\lambda(v\to u)|>N
	\right\}
	\in\mathcal F.
	\]
	The choice of \(N\) implies that every \(n\in I\) satisfies
	\(n\geq n_0\).
	
	Fix \(n\in I\). For each \(v\in E\), choose
	\(u(v,n)\in\operatorname{Chi}^n(v)\) such that
	\[
	|\lambda(v\to u(v,n))|
	=
	\max_{u\in\operatorname{Chi}^n(v)}
	|\lambda(v\to u)|.
	\]
	Define
	\[
	R_ny_j
	=
	\sum_{v\in E}
	\frac{a_{j,v}}{\lambda(v\to u(v,n))}
	e_{u(v,n)},
	\qquad j=1,2.
	\]
	Then
	\[
	B_\lambda^nR_ny_j=y_j,
	\qquad j=1,2,
	\]
	and, using the norm formula for \(e_v\),
	\[
	\begin{aligned}
		\|R_ny_j\|_{\operatorname{Ces}_p(T)}
		&\leq
		\sum_{v\in E}
		\frac{|a_{j,v}|}
		{c_{|v|+n}\Phi_p(|v|+n)
			|\lambda(v\to u(v,n))|} \\
		&<
		\frac{1}{N}\sum_{v\in E}|a_{j,v}|
		<
		\varepsilon.
	\end{aligned}
	\]
	Thus
	\[
	h_j:=x_j+R_ny_j\in U_j
	\]
	and, since \(n\geq n_0\),
	\[
	B_\lambda^nh_j=y_j\in V_j,
	\qquad j=1,2.
	\]
	Consequently,
	\[
	I
	\subseteq
	N_{B_\lambda\oplus B_\lambda}
	(U_1\times U_2,V_1\times V_2).
	\]
	Hence \(B_\lambda\oplus B_\lambda\) is
	\(\mathcal F\)-transitive.
	
	In particular, \(B_\lambda\) is weakly mixing. Moreover, for
	nonempty open sets \(U,V\subseteq\operatorname{Ces}_p(T)\),
	\[
	N_{B_\lambda}(U,V)
	=
	N_{B_\lambda\oplus B_\lambda}
	\bigl(
	U\times\operatorname{Ces}_p(T),
	V\times\operatorname{Ces}_p(T)
	\bigr).
	\]
	Hence \(B_\lambda\) is \(\mathcal F\)-transitive. By
	Lemma~\numref{lem:F-weak-mixing}, it is
	\(\widetilde{\mathcal F}\)-transitive.
\end{proof}
We shall also use the following standard notions. Let \(X\) be a
separable infinite-dimensional Banach space and let
\(S\in\mathcal L(X)\). The operator \(S\) is called \textit{weakly
mixing} if \(S\oplus S\) is hypercyclic on \(X\oplus X\), where
\[
(S\oplus S)(x,y)=(Sx,Sy),
\qquad (x,y)\in X\oplus X.
\]

A set \(A\subseteq\mathbb N_0\) is called \textit{syndetic} if there
exists \(N\in\mathbb N\) such that
\[
A\cap\{m,m+1,\ldots,m+N\}\neq\varnothing
\]
for every \(m\in\mathbb N_0\). The operator \(S\) is called
\textit{topologically ergodic} if \(N_S(U,V)\) is syndetic for every
pair of nonempty open sets \(U,V\subseteq X\).

The operator \(S\) is called \textit{topologically mixing} (or simply
\textit{mixing}) if, for every pair of nonempty open sets
\(U,V\subseteq X\), there exists \(n_0\in\mathbb N_0\) such that
\[
S^n(U)\cap V\neq\varnothing,
\qquad n\geq n_0.
\]

\begin{rem}[Classical dynamical properties and Furstenberg families]
\label{rem:classical-dynamics}

Consider the Furstenberg families
\[
\begin{aligned}
\mathcal F_\infty
&=
\{A\subseteq\mathbb N_0:
A\text{ is infinite}\},\\
\mathcal F_{\mathrm{thick}}
&=
\{A\subseteq\mathbb N_0:
A\text{ contains arbitrarily long intervals}\},\\
\mathcal F_{\mathrm{syn}}
&=
\{A\subseteq\mathbb N_0:
A\text{ is syndetic}\},\\
\mathcal F_{\mathrm{cof}}
&=
\{A\subseteq\mathbb N_0:
\mathbb N_0\setminus A\text{ is finite}\}.
\end{aligned}
\]

For \(B_\lambda\in\mathcal L(\operatorname{Ces}_p(T))\), we have
\[
\begin{aligned}
B_\lambda\text{ is hypercyclic}
&\iff
B_\lambda\text{ is }\mathcal F_\infty\text{-transitive},\\
B_\lambda\text{ is weakly mixing}
&\iff
B_\lambda\text{ is }\mathcal F_{\mathrm{thick}}\text{-transitive},\\
B_\lambda\text{ is topologically ergodic}
&\iff
B_\lambda\text{ is }\mathcal F_{\mathrm{syn}}\text{-transitive},\\
B_\lambda\text{ is topologically mixing}
&\iff
B_\lambda\text{ is }\mathcal F_{\mathrm{cof}}\text{-transitive}.
\end{aligned}
\]

These equivalences are standard; see, for example, \cite{KR2,CC}.
\end{rem}

The following example separates \(\mathcal F\)-transitivity from
\(\mathcal F\)-hypercyclicity for sets of positive lower density, even
among mixing weighted shifts.

\begin{ex}\label{ex:lower-density-transitive-not-hypercyclic}
	Consider the full binary tree \(T_2\) and set
	\(\lambda_u=1/2\) for every \(u\neq o\). Since \(c_n=2^n\), the
	boundedness criterion gives
	\[
	\sup_{n\geq0}
	\frac{c_{n+1}}{c_n}
	\max_{u\in D_{n+1}}|\lambda_u|
	=1.
	\]
	Thus \(B_\lambda\) is bounded on
	\(\operatorname{Ces}_p(T_2)\). Moreover, for \(v\in D_m\),
	\[
	c_{m+n}\Phi_p(m+n)
	\max_{u\in\operatorname{Chi}^n(v)}
	|\lambda(v\to u)|
	=
	2^m\Phi_p(m+n)
	\longrightarrow\infty
	\qquad\text{as }n\to\infty.
	\]
	Hence the set in
	Theorem~\numref{thm:F-transitivity-growth}\textup{(iii)} is cofinite.
	Therefore, \(B_\lambda\) is mixing and
	\(\mathcal F_{\underline d>0}\)-transitive.
	
	Suppose that \(B_\lambda\) is
	\(\mathcal F_{\underline d>0}\)-hypercyclic, and let \(f\) be a
	corresponding vector. Set
	\[
	W
	=
	\left\{
	g\in\operatorname{Ces}_p(T_2):
	\|g-e_o\|_{\operatorname{Ces}_p(T_2)}
	<
	\frac{1}{2\Phi_p(0)}
	\right\},
	\qquad
	A=N_{B_\lambda}(f,W).
	\]
	Then \(\underline d(A)>0\). For \(n\in A\), the pointwise estimate
	at \(o\) and the iteration formula give
	\[
	\frac12
	<
	|(B_\lambda^nf)(o)|
	=
	\biggl|
	\frac1{2^n}\sum_{u\in D_n}f(u)
	\biggr|
	\leq A_n(f).
	\]
	Choose \(0<\delta<\underline d(A)\). For all sufficiently large
	\(N\),
	\[
	\begin{aligned}
		\frac{1}{N+1}\sum_{n=0}^{N}A_n(f)
		&\geq
		\frac{1}{N+1}
		\sum_{n\in A\cap\{0,\ldots,N\}}A_n(f)\\
		&>
		\frac12
		\frac{|A\cap\{0,\ldots,N\}|}{N+1}
		>
		\frac{\delta}{2}.
	\end{aligned}
	\]
	Consequently, the series defining
	\(\|f\|_{\operatorname{Ces}_p(T_2)}^p\) diverges, a contradiction.
	Thus \(B_\lambda\) is not
	\(\mathcal F_{\underline d>0}\)-hypercyclic.
\end{ex}

\subsection{Nonzero orbit limit points}
We next consider the existence of nonzero limit points of individual
orbits, a property weaker than hypercyclicity. For weighted backward
shifts on \(\operatorname{Ces}_p(T)\), this property admits a
characterization in terms of path-weight growth.
\begin{defn}\label{def:orbit-limit-point}
	Let \(X\) be a Banach space, \(S\in\mathcal L(X)\), and \(x\in X\).
	A vector \(y\in X\) is called \textit{an orbit limit point of \(x\)} under
	\(S\) if \(S^{n_k}x\to y\) for a strictly increasing sequence of
	positive integers \((n_k)_{k\geq1}\). We write
	\[
	L_S(x)
	=
	\{y\in X:S^{n_k}x\to y
	\text{ for some }n_k\uparrow\infty\}.
	\]
	The operator \(S\) is said to have a nonzero orbit limit point if
	\[
	L_S(x)\setminus\{0\}\neq\varnothing
	\]
	for some \(x\in X\).
\end{defn}

\begin{thm}
	\label{thm:nonzero-orbit-limit}
	Let \(1<p<\infty\), let \(T=(V,E)\) be a leafless, locally finite rooted
	tree with root \(o\), and let
	\[
	B_\lambda\in
	\mathcal L\bigl(\operatorname{Ces}_p(T)\bigr)
	\]
	have nonzero weights. Then the following assertions are equivalent:
	\begin{enumerate}
		\renewcommand{\labelenumi}{\textup{(\roman{enumi})}}
		\item \(B_\lambda\) has a nonzero orbit limit point;
		\item \(e_o\in L_{B_\lambda}(f)\) for some
		\(f\in\operatorname{Ces}_p(T)\);
		\item there exist \(v\in V\) and a strictly increasing sequence of
		positive integers \((n_k)_{k\geq1}\) such that
		\[
		c_{|v|+n_k}\Phi_p(|v|+n_k)
		\max_{u\in\operatorname{Chi}^{n_k}(v)}
		|\lambda(v\to u)|
		\longrightarrow\infty;
		\]
	\item there exists a strictly increasing sequence of positive integers
	\((n_k)_{k\geq1}\) such that
	\[
	c_{n_k}\Phi_p(n_k)
	\max_{u\in\operatorname{Chi}^{n_k}(o)}
	|\lambda(o\to u)|
	\longrightarrow\infty.
	\]
	\end{enumerate}
\end{thm}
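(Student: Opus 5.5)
We prove the cycle \textup{(ii)}\(\Rightarrow\)\textup{(i)}\(\Rightarrow\)\textup{(iii)}\(\Rightarrow\)\textup{(iv)}\(\Rightarrow\)\textup{(ii)}. The first implication is trivial, since \(e_o\neq0\). For \textup{(i)}\(\Rightarrow\)\textup{(iii)}, let \(y\neq0\) with \(B_\lambda^{n_k}f\to y\), and pick \(v\) with \(y(v)\neq0\). By \eqnref{eq:cesaro-point-estimate}, point evaluation is continuous, so \(B_\lambda^{n_k}f(v)\to y(v)\). As in the proof of Theorem~\numref{thm:F-transitivity-growth},
\[
|B_\lambda^{n}f(v)|
\leq
\max_{u\in\operatorname{Chi}^{n}(v)}|\lambda(v\to u)|\,c_{|v|+n}A_{|v|+n}(f)
=
M_n(v)\,\frac{A_{|v|+n}(f)}{\Phi_p(|v|+n)},
\]
where \(M_n(v)=c_{|v|+n}\Phi_p(|v|+n)\max_{u\in\operatorname{Chi}^n(v)}|\lambda(v\to u)|\). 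For large \(k\) the left side exceeds \(|y(v)|/2>0\). Lemma~\numref{lem:cesaro-level-decay} gives \(A_{|v|+n_k}(f)/\Phi_p(|v|+n_k)\to0\), and this quotient is positive for large \(k\) because the left side is nonzero. Hence \(M_{n_k}(v)\to\infty\).

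For \textup{(iii)}\(\Rightarrow\)\textup{(iv)}, put \(m=|v|\) and use the path factorization \(\lambda(o\to u)=\lambda(o\to v)\lambda(v\to u)\) for \(u\in\operatorname{Chi}^n(v)\subseteq\operatorname{Chi}^{m+n}(o)\). This gives
\[
c_{m+n}\Phi_p(m+n)\max_{u\in\operatorname{Chi}^{m+n}(o)}|\lambda(o\to u)|
\geq
|\lambda(o\to v)|\,M_n(v).
\]
Since the weights are nonzero, \(|\lambda(o\to v)|>0\), and the sequence \((m+n_k)\) works.

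The main step is \textup{(iv)}\(\Rightarrow\)\textup{(ii)}, a series construction. Write \(M_n=c_n\Phi_p(n)\max_{u\in\operatorname{Chi}^n(o)}|\lambda(o\to u)|\), pick a maximizer \(u_n\in D_n\), and set \(g_n=e_{u_n}/\lambda(o\to u_n)\). By the norm formula for \(e_v\), we have \(\|g_n\|=1/M_n\) and \(B_\lambda^ng_n=e_o\). Moreover, \(B_\lambda^m g_n=0\) for \(m>n\), since \(B_\lambda^m\) kills vectors supported at level \(n<m\).

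Put \(K=1+\|B_\lambda\|\). Passing to a subsequence of the given \((n_k)\), choose it inductively so that \(M_{n_k}\geq 4^kK^{n_{k-1}}\); this is possible because \(M_{n_k}\to\infty\). Define \(f=\sum_k g_{n_k}\). The series converges absolutely, since \(\sum_k 1/M_{n_k}\leq\sum_k4^{-k}\). Then
\[
B_\lambda^{n_k}f
=
e_o+\sum_{j>k}B_\lambda^{n_k}g_{n_j},
\]
because the terms with \(j<k\) vanish. The tail is controlled by the crude operator-norm bound
\[
\Bigl\|\sum_{j>k}B_\lambda^{n_k}g_{n_j}\Bigr\|
\leq
K^{n_k}\sum_{j>k}4^{-j}K^{-n_{j-1}}
\leq
\sum_{j>k}4^{-j}\longrightarrow0,
\]
using \(n_{j-1}\geq n_k\) for \(j>k\). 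Hence \(B_\lambda^{n_k}f\to e_o\), which is \textup{(ii)}.

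The obstacle I expect is exactly this tail. The natural direct bound on \(\|B_\lambda^{n_k}g_{n_j}\|\) involves an intermediate ancestor \(w\) of \(u_{n_j}\), with norm \(1/(c_{|w|}\Phi_p(|w|)|\lambda(o\to w)|)\). This quantity is not controlled by the maximality of \(u_{n_j}\), so the plan instead absorbs the tail using \(\|B_\lambda\|^{n_k}\) and rapid sparsification of the sequence.
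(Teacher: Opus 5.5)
Your proof is correct and follows the paper's argument essentially step for step. It uses the same cycle of implications, the same level-decay lemma for \(\textup{(i)}\Rightarrow\textup{(iii)}\), the same path factorization for \(\textup{(iii)}\Rightarrow\textup{(iv)}\), and the same sparsified series \(\sum_k e_{u_{n_k}}/\lambda(o\to u_{n_k})\) for \(\textup{(iv)}\Rightarrow\textup{(ii)}\). The only cosmetic difference is that you control the tail with the crude bound \(\|B_\lambda^{n_k}\|\leq K^{n_k}\) (taking \(n_0=0\) at the first step), whereas the paper builds \(C_k=\max(\{1\}\cup\{\|B_\lambda^{n_i}\|:i<k\})\) directly into the growth condition \(Q_{n_k}>2^kC_k\).
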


\begin{proof}
	We prove
	\[
	\textup{(ii)}\Longrightarrow\textup{(i)}
	\Longrightarrow\textup{(iii)}
	\Longrightarrow\textup{(iv)}
	\Longrightarrow\textup{(ii)}.
	\]
	The implication
	\(\textup{(ii)}\Rightarrow\textup{(i)}\) is immediate since
	\(e_o\neq0\).

	Assume \textup{(i)}. Choose \(f\in\operatorname{Ces}_p(T)\),
	\(0\neq g\in L_{B_\lambda}(f)\), and a strictly increasing sequence of
	positive integers \((n_k)_{k\geq1}\) such that
	\[
	B_\lambda^{n_k}f\to g\qquad\text{as }k\to\infty.
	\]
	Fix \(v\in V\) with \(g(v)\neq0\). By continuity of point evaluation,
	there exists \(\delta>0\) such that
	\[
	|B_\lambda^{n_k}f(v)|\geq\delta
	\]
	for all sufficiently large \(k\). Put \(r_k=|v|+n_k\). Then
	\[
	\begin{aligned}
		\delta
		\leq
		\biggl|
		\sum_{u\in\operatorname{Chi}^{n_k}(v)}
		\lambda(v\to u)f(u)
		\biggr| \leq
		c_{r_k}A_{r_k}(f)
		\max_{u\in\operatorname{Chi}^{n_k}(v)}
		|\lambda(v\to u)|.
	\end{aligned}
	\]
Consequently,
\[
c_{r_k}\Phi_p(r_k)
\max_{u\in\operatorname{Chi}^{n_k}(v)}
|\lambda(v\to u)|
\geq
\delta\frac{\Phi_p(r_k)}{A_{r_k}(f)}.
\]
Since \(r_k\to\infty\), Lemma~\numref{lem:cesaro-level-decay} gives
\[
\frac{A_{r_k}(f)}{\Phi_p(r_k)}
\longrightarrow0.
\]
Therefore,
\[
c_{r_k}\Phi_p(r_k)
\max_{u\in\operatorname{Chi}^{n_k}(v)}
|\lambda(v\to u)|
\longrightarrow\infty,
\]
which proves \textup{(iii)}.
	
	Assume \textup{(iii)}. If \(v=o\), then \textup{(iv)} is immediate.
	Otherwise, put \(m=|v|\). Since all weights are nonzero,
	\[
	|\lambda(o\to v)|>0.
	\]
	Moreover,
	\[
	\begin{aligned}
		&c_{m+n_k}\Phi_p(m+n_k)
		\max_{u\in\operatorname{Chi}^{m+n_k}(o)}
		|\lambda(o\to u)| \\
		&\quad\geq
		|\lambda(o\to v)|
		c_{|v|+n_k}\Phi_p(|v|+n_k)
		\max_{u\in\operatorname{Chi}^{n_k}(v)}
		|\lambda(v\to u)|
		\longrightarrow\infty.
	\end{aligned}
	\]
	Thus \textup{(iv)} follows with the strictly increasing sequence
\((m+n_k)_{k\geq1}\).
	
Finally, assume \textup{(iv)}, and set
\[
Q_n
=
c_n\Phi_p(n)
\max_{u\in\operatorname{Chi}^n(o)}
|\lambda(o\to u)|.
\]
By \textup{(iv)}, there exists a strictly increasing sequence
\((m_j)\) such that \(Q_{m_j}\to\infty\). Recursively choose a
subsequence \((n_k)\) of \((m_j)\) such that
\begin{equation}\label{eq:orbit-subsequence}
	Q_{n_k}>2^kC_k,
	\qquad
	C_k
	=
	\max\bigl(
	\{1\}\cup
	\{\|B_\lambda^{n_i}\|:1\leq i<k\}
	\bigr).
\end{equation}
	Choose \(u_k\in\operatorname{Chi}^{n_k}(o)\) such that
	\[
	|\lambda(o\to u_k)|
	=
	\max_{u\in\operatorname{Chi}^{n_k}(o)}
	|\lambda(o\to u)|.
	\]
	Since all weights are nonzero, \(\lambda(o\to u_k)\neq0\). Define
	\(
	h_k
	=
	\frac{1}{\lambda(o\to u_k)}e_{u_k}.
	\)
Then, by \eqnref{eq:orbit-subsequence},
	\[
	B_\lambda^{n_k}h_k=e_o,
	\qquad
	\|h_k\|_{\operatorname{Ces}_p(T)}
	=
	\frac{1}{Q_{n_k}}
	<
	\frac{1}{2^kC_k}
	\leq2^{-k}.
	\]
	Hence
	\[
	f:=\sum_{k=1}^{\infty}h_k
	\in\operatorname{Ces}_p(T).
	\]
	For fixed \(j\), we have \(B_\lambda^{n_j}h_k=0\) when \(k<j\) and
	\(B_\lambda^{n_j}h_j=e_o\). Thus, \eqnref{eq:orbit-subsequence} gives
	\[
	\begin{aligned}
		\|B_\lambda^{n_j}f-e_o\|_{\operatorname{Ces}_p(T)}
		&\leq
		\sum_{k>j}
		\|B_\lambda^{n_j}\|
		\|h_k\|_{\operatorname{Ces}_p(T)} \\
		&\leq
		\sum_{k>j}\frac{C_k}{2^kC_k}
		=
		2^{-j}
		\longrightarrow0
		\qquad (j\to\infty).
	\end{aligned}
	\]
	Thus
	\[
	B_\lambda^{n_j}f\longrightarrow e_o
	\qquad (j\to\infty),
	\]
	so \(e_o\in L_{B_\lambda}(f)\). Hence \textup{(ii)} follows.
\end{proof}
\begin{figure}[ht]
	\centering
	\includegraphics[width=.75\textwidth]
	{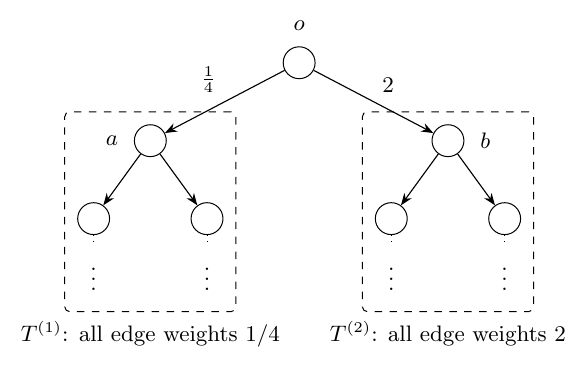}
	\caption{The weight distribution on the two principal subtrees of
		the full binary tree \(T_2\).}
	\label{fig:binary-tree-weight-split}
\end{figure}
\begin{ex}\label{ex:binary-tree-nonhypercyclic-limit-point}
	On the full binary tree, there exists a bounded weighted backward shift
	on \(\operatorname{Ces}_p(T_2)\) that is not hypercyclic but has a
	nonzero orbit limit point.
	
	Let \(T_2\) be the full binary tree rooted at \(o\), and let \(a\) and
	\(b\) be the two children of \(o\). For each \(u\neq o\), assign to the
	edge from its parent to \(u\) the weight
	\[
	\lambda_u=
	\begin{cases}
		\dfrac14,&\text{ if }\; u\in V(a),\\[2ex]
		2,&\text{ if }\; u\in V(b),
	\end{cases}
	\]
	as illustrated in Figure~\numref{fig:binary-tree-weight-split}. Since
	\(T_2\) is the full binary tree,
	\[
	c_n=2^n,\qquad n\in\mathbb N_0.
	\]
	Thus all weights in the two principal subtrees are \(1/4\) and \(2\),
	respectively. By the boundedness criterion,
	\[
	\sup_{n\geq0}
	\frac{c_{n+1}}{c_n}
	\max_{u\in D_{n+1}}|\lambda_u|
	=2\cdot2=4,
	\]
	so \(B_\lambda\in\mathcal L(\operatorname{Ces}_p(T_2))\).
	
 For \(n\geq1\) and
	\(u\in\operatorname{Chi}^n(a)\), we have
	\[
	|\lambda(a\to u)|=4^{-n}.
	\]
	Consequently,
	\[
	c_{|a|+n}\Phi_p(|a|+n)
	\max_{u\in\operatorname{Chi}^n(a)}
	|\lambda(a\to u)|
	=
	2^{1-n}\Phi_p(n+1).
	\]
	By Lemma~\numref{lem2.3},
	\[
	2^{1-n}\Phi_p(n+1)
	\leq
	(p-1)^{1/p}2^{1-n}(n+2)^{(p-1)/p}
	\longrightarrow0
	\qquad(n\to\infty).
	\]
	Hence
	\[
	\left\{
	n\in\mathbb N_0:
	c_{|a|+n}\Phi_p(|a|+n)
	\max_{u\in\operatorname{Chi}^n(a)}
	|\lambda(a\to u)|>1
	\right\}
	\]
	is finite. Taking \(F=\{a\}\) and \(N=1\) in
	Theorem~\numref{thm:F-transitivity-growth}, we see that \(B_\lambda\) is
	not \(\mathcal F_\infty\)-transitive and therefore is not hypercyclic.
	
	On the other hand, for each \(n\geq1\), choose
	\(u_n\in\operatorname{Chi}^{n-1}(b)\). Then
	\[
	|\lambda(o\to u_n)|=2^n.
	\]
	Since every edge weight is at most \(2\),
	\[
	\max_{u\in\operatorname{Chi}^n(o)}
	|\lambda(o\to u)|=2^n.
	\]
	It follows that
	\[
	c_n\Phi_p(n)
	\max_{u\in\operatorname{Chi}^n(o)}
	|\lambda(o\to u)|
	=
	4^n\Phi_p(n)
	\longrightarrow\infty
	\qquad(n\to\infty).
	\]
	By Theorem~\numref{thm:nonzero-orbit-limit}, \(B_\lambda\) has a nonzero
	orbit limit point; indeed, there exists
	\(f\in\operatorname{Ces}_p(T_2)\) such that
	\(e_o\in L_{B_\lambda}(f)\).
\end{ex}
\subsection{Chaotic weighted shifts}
We now study chaotic weighted backward shifts on
\(\operatorname{Ces}_p(T)\). Building on
\cite[Theorem~3.5]{KDP}, we characterize chaos by an explicit
summability condition for unit flows.
\begin{defn}\label{def:chaotic}
	Let \(X\) be a separable Banach space and let
	\(S\in\mathcal L(X)\). A vector \(x\in X\) is called \textit{periodic} for
	\(S\) if \(S^n x=x\) for some \(n\in\mathbb N\). The operator \(S\)
	is called \textit{chaotic} if it is hypercyclic and its periodic points are
	dense in \(X\).
\end{defn}

\begin{defn}\label{def:unit-flow}
	Let \(v\in V\). A function
	\(\theta:V(v)\to[0,\infty)\) is called a \textit{flow} on \(V(v)\)
	if
	\[
	\theta(u)
	=
	\sum_{w\in\operatorname{Chi}(u)}\theta(w),
	\qquad u\in V(v).
	\]
	A flow \(\theta\) is called a \textit{unit flow} if
	\(\theta(v)=1\).
\end{defn}

\begin{thm}\label{thm:chaos-flow-characterization}
	Let \(1<p<\infty\), let \(T=(V,E)\) be a leafless, locally finite rooted
	tree, and let
	\[
	B_\lambda\in\mathcal L(\operatorname{Ces}_p(T))
	\]
	have nonzero weights. Then the following assertions are equivalent:
	\begin{enumerate}
		\renewcommand{\labelenumi}{\textup{(\roman{enumi})}}
		\item \(B_\lambda\) is chaotic;
		
		\item the periodic points of \(B_\lambda\) are dense in
		\(\operatorname{Ces}_p(T)\);
		
		\item for every \(v\in V\), there exists
		\(f_v\in\operatorname{Ces}_p(T)\), supported on \(V(v)\), such that
		\[
		f_v(v)=1,
		\qquad
		B_\lambda f_v=f_v
		\quad\text{on }V(v);
		\]
		
		\item for every \(v\in V\), there exists a unit flow
		\(\theta:V(v)\to[0,\infty)\) such that
		\[
		\sum_{N=|v|}^{\infty}
		\biggl(
		\frac{1}{N+1}
		\sum_{s=|v|}^{N}
		\frac{1}{c_s}
		\sum_{u\in\operatorname{Chi}^{s-|v|}(v)}
		\frac{\theta(u)}{|\lambda(v\to u)|}
		\biggr)^p
		<\infty.
		\]
	\end{enumerate}
	Moreover, each of these conditions implies that \(B_\lambda\) is
	topologically mixing.
\end{thm}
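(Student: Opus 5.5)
The plan is to prove (i)$\Rightarrow$(ii)$\Rightarrow$(iii)$\Rightarrow$(i), and then (iii)$\Leftrightarrow$(iv), with mixing obtained along the way.

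\emph{(i)$\Rightarrow$(ii)$\Rightarrow$(iii).} The first implication is trivial. For (ii)$\Rightarrow$(iii) I will use the fixed-point framework of \cite[Theorem~3.5]{KDP}, adapted to $\operatorname{Ces}_p(T)$. The only properties of the space needed there are:
\begin{enumerate}
\item[(a)] continuity of the point evaluations, which is \eqnref{eq:cesaro-point-estimate};
\item[(b)] density of $c_{00}(T)$, which is Proposition~\numref{prop2.5};
\item[(c)] the fact that the norm is a lattice norm, i.e. $|g|\le|h|$ implies $\|g\|\le\|h\|$, so restriction to a subtree $V(v)$ is a contraction.
\end{enumerate}
Fix $v$ and take a periodic point $g$ with $B_\lambda^m g=g$ and $\|g-e_v\|$ small; by (a), $g(v)\approx 1$. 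The key observation is $B_\lambda^{n}(g\chi_{V(v)})(v)=B_\lambda^n g(v)$. Following \cite{KDP}, I will average along the period: with $\mu$ an $m$-th root of unity, set
\[
h=\frac1m\sum_{j=0}^{m-1}B_\lambda^j\bigl(g\chi_{V(v)}\bigr)\ \text{(suitably restricted to }V(v)).
\]
This produces a fixed point on $V(v)$ whose value at $v$ is nonzero. Normalizing gives $f_v$. The bookkeeping of the supports is exactly as in the $\ell^p$ case.

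\emph{(iii)$\Rightarrow$(i).} For finitely many vertices $v$, take $f_v$ and form the periodic points. Since $B_\lambda f_v=f_v$ on $V(v)$, the vector $B_\lambda^{k}f_v$ differs from $f_v$ only along the path above $v$. Following \cite{KDP}, combinations $\sum_v a_v f_v$ adjusted on the finitely many ancestors give points fixed by some $B_\lambda^{N}$. These approximate any $y\in c_{00}(T)$, because the tails of the $f_v$ can be cut off at a deep level with small norm (Proposition~\numref{prop2.5}). For mixing I will verify Theorem~\numref{thm:F-transitivity-growth}(iii) with $\mathcal F_{\mathrm{cof}}$. Writing $f_v(u)$ for $u\in\operatorname{Chi}^n(v)$, the fixed-point equation iterated gives
\[
1=\sum_{u\in\operatorname{Chi}^n(v)}\lambda(v\to u)f_v(u)\le \max_{u\in\operatorname{Chi}^n(v)}|\lambda(v\to u)|\,c_{|v|+n}A_{|v|+n}(f_v).
\]
Lemma~\numref{lem:cesaro-level-decay} then yields $c_{|v|+n}\Phi_p(|v|+n)\max_u|\lambda(v\to u)|\to\infty$ as $n\to\infty$, so the set in Theorem~\numref{thm:F-transitivity-growth}(iii) is cofinite. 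Hence $B_\lambda$ is mixing, and in particular hypercyclic, so it is chaotic.

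\emph{(iii)$\Leftrightarrow$(iv).} Given a unit flow $\theta$, define $f_v(u)=\theta(u)/\lambda(v\to u)$ on $V(v)$ and $0$ elsewhere. The flow identity gives $B_\lambda f_v(u)=\sum_{w}\lambda_w\theta(w)/\lambda(v\to w)=\theta(u)/\lambda(v\to u)$, so $f_v$ is fixed on $V(v)$ with $f_v(v)=1$. Since $A_s(f_v)=c_s^{-1}\sum_{u\in\operatorname{Chi}^{s-|v|}(v)}\theta(u)/|\lambda(v\to u)|$ and $A_s(f_v)=0$ for $s<|v|$, the norm $\|f_v\|^p$ equals the series in (iv) after replacing $1/(N+1)$ by the same weights. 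The terms with $N<|v|$ vanish, so finiteness is equivalent to (iv).

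Conversely, given $f_v$, set $\theta(u)=\lambda(v\to u)f_v(u)$. This satisfies the flow identity with $\theta(v)=1$, but it is complex-valued. This is the main obstacle: producing a nonnegative flow. My first attempt will be $\theta(u)=|\lambda(v\to u)f_v(u)|$, which is only a superflow, $\theta(u)\le\sum_w\theta(w)$. I would then extract a genuine flow $\tilde\theta\le\theta$ with $\tilde\theta(v)=1$, by distributing mass top-down proportionally, $\tilde\theta(w)=\tilde\theta(u)\theta(w)/\sum_{w'}\theta(w')$. This keeps $\tilde\theta\le\theta$ inductively. The summability in (iv) is monotone in $\theta$, so it is inherited from $\|f_v\|<\infty$.
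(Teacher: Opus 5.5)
Your arguments for mixing and for (iii)$\Leftrightarrow$(iv) are the paper's. The step (ii)$\Rightarrow$(iii), however, has a genuine gap. The average $h=\frac1m\sum_{j=0}^{m-1}B_{\lambda,v}^j(g\chi_{V(v)})$ is indeed fixed on $V(v)$, because restriction to $V(v)$ intertwines $B_\lambda$ with the shift on the subtree. But its value at $v$ is
\[
h(v)=\frac1m\sum_{j=0}^{m-1}(B_\lambda^jg)(v),
\]
which is the value at $v$ of the fixed-space component of $g$, and nothing forces it to be nonzero. Knowing $g(v)\approx1$ gives no control over $(B_\lambda^jg)(v)=\sum_{u\in\operatorname{Chi}^j(v)}\lambda(v\to u)g(u)$ for $1\le j<m$: the period $m$ is not under your control, and the products $\lambda(v\to u)$ may be huge. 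For example, if $B_\lambda g=-g$, then $h(v)=0$. Your unused root of unity $\mu$ would at best give an eigenvector $B_{\lambda,v}h_\mu=\mu h_\mu$ with $h_\mu(v)\neq0$. Turning it into a fixed point needs the further level-wise phase change $u\mapsto\mu^{-(|u|-|v|)}h_\mu(u)$, which preserves the norm because the norm depends only on $|f|$; you do not mention this. The paper instead first passes to $g\chi_{\bigcup_{k\ge0}\operatorname{Chi}^{kN}(v)}$, where $N$ is the period. Restricting to an arithmetic progression of levels with difference $N$ commutes with $B_{\lambda,v}^N$, so this vector is still $N$-periodic on $V(v)$. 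It also vanishes on $\operatorname{Chi}^j(v)$ for $1\le j<N$, so in $\sum_{j=0}^{N-1}B_{\lambda,v}^j$ only the $j=0$ term contributes at $v$, and the resulting fixed point equals $g(v)=1$ there.

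The same device is what your sketch of (iii)$\Rightarrow$(ii) is missing. Let $\widetilde f_v$ be the fixed point obtained by setting $\widetilde f_v(\operatorname{par}^j(v))=\lambda(\operatorname{par}^j(v)\to v)$. It is not close to $e_v$, since its ancestor values and its tail are not small. Cutting a vector off at a deep level cannot produce a nonzero periodic point either, because finitely supported vectors are killed by high powers of $B_\lambda$. The paper takes $p_N=\widetilde f_v\chi_{G_N}$ with $G_N=\bigcup_{k\ge0}D_{|v|+kN}$ and $N>|v|$. Then $B_\lambda^Np_N=p_N$, $p_N$ agrees with $e_v$ on all levels below $|v|+N$, and $\|p_N-e_v\|\le\|\widetilde f_v\chi_{\{|u|\ge|v|+N\}}\|\to0$. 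Density follows because the periodic points form a linear subspace.

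These points also show that the first gap is real, not just an omission. Let $u$ be a proper descendant of $v$, and let $p_N'$ be the analogous point for $u$, with $N>|u|$. Then $g=p_N-\lambda(v\to u)^{-1}p_N'$ is $N$-periodic with $g(v)=1$ and $\sum_{j<N}(B_\lambda^jg)(v)=1-1=0$. Meanwhile $\|g-e_v\|\to1/\bigl(c_{|u|}\Phi_p(|u|)|\lambda(v\to u)|\bigr)$ as $N\to\infty$. This limit is small for suitable deep $u$ whenever (iii) holds, by the very growth property you derive for mixing.
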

\begin{proof}

We first prove
\[
\textup{(i)}\Longrightarrow\textup{(ii)}
\Longrightarrow\textup{(iii)}.
\]
We then show that \textup{(iii)} implies both \textup{(ii)} and
topological mixing, and hence \textup{(i)}. Finally, we establish the
equivalence of \textup{(iii)} and \textup{(iv)}.
	The implication \(\textup{(i)}\Rightarrow\textup{(ii)}\) follows
	directly from the definition of chaos.
	
	\medskip
	
	\noindent
	\(\textup{(ii)}\Rightarrow\textup{(iii)}\).
	Fix \(v\in V\). Since point evaluation at \(v\) is continuous, the set
	\[
	U_v
	=
	\left\{
	f\in\operatorname{Ces}_p(T):
	|f(v)-1|<\frac12
	\right\}
	\]
	is nonempty and open. By the density of the periodic points, there
	exists a periodic point \(h\in U_v\). In particular, \(h(v)\neq0\).
	Replacing \(h\) by \(h/h(v)\), we may assume that \(h(v)=1\).
	Choose \(N\in\mathbb N\) such that \(B_\lambda^Nh=h\).
	
	For a subset \(A\subseteq V\), let \(\chi_A\) denote its characteristic
	function. Let \(B_{\lambda,v}\) denote the weighted backward shift restricted to
	the subtree \(V(v)\), and set
	\[
	g
	=
	h\chi_{\bigcup_{k\geq0}\operatorname{Chi}^{kN}(v)}.
	\]
Since \(|g(u)|\leq|h(u)|\) for every \(u\in V\), we have
\[
\|g\|_{\operatorname{Ces}_p(T)}
\leq
\|h\|_{\operatorname{Ces}_p(T)}.
\]
Hence \(g\in\operatorname{Ces}_p(T)\). Moreover,
	\[
	B_{\lambda,v}^Ng=g,
	\qquad
	g(v)=1.
	\]
	Define
	\[
	f_v
	=
	g+B_{\lambda,v}g+\cdots+B_{\lambda,v}^{N-1}g.
	\]
	For \(0\leq j<N\),
	\[
	B_{\lambda,v}^jg
	=
	(B_\lambda^jg)\chi_{V(v)},
	\]
	and hence
	\[
	\|B_{\lambda,v}^jg\|_{\operatorname{Ces}_p(T)}
	\leq
	\|B_\lambda^jg\|_{\operatorname{Ces}_p(T)}
	<\infty.
	\]
	Therefore \(f_v\in\operatorname{Ces}_p(T)\). Moreover, \(f_v\) is
	supported on \(V(v)\), and
	\[
	B_{\lambda,v}f_v
	=
	\sum_{j=1}^{N}B_{\lambda,v}^jg
	=
	f_v.
	\]
	Since \(g\) vanishes on
	\(\operatorname{Chi}^j(v)\) for \(1\leq j<N\), we also have
	\[
	f_v(v)=g(v)=1.
	\]
	Thus \textup{(iii)} holds.
	
	\medskip
	
	\noindent
	\(\textup{(iii)}\Rightarrow\textup{(ii)}\).
	Fix \(v\in V\), put \(m=|v|\), and let \(f_v\) be as in
	\textup{(iii)}. Here \(\operatorname{par}^j(v)\) denotes the \(j\)-th
	ancestor of \(v\). Extend \(f_v\) to a vector
	\(\widetilde f_v\in\operatorname{Ces}_p(T)\) by setting
	\[
	\widetilde f_v(u)
	=
	\begin{cases}
		f_v(u),&u\in V(v),\\
		\lambda(u\to v),
		&u=\operatorname{par}^j(v),\ 1\leq j\leq m,\\
		0,&\text{otherwise}.
	\end{cases}
	\]
	Only finitely many new coordinates have been added, so
	\(\widetilde f_v\in\operatorname{Ces}_p(T)\). The definition of
	\(\widetilde f_v\) and the fixed-point property of \(f_v\) give
	\[
	B_\lambda\widetilde f_v=\widetilde f_v.
	\]
	
	For \(N>m\), set
	\[
	G_N
	=
	\bigcup_{k\geq0}D_{m+kN},
	\qquad
	p_N=\widetilde f_v\chi_{G_N}.
	\]
	Since \(B_\lambda\widetilde f_v=\widetilde f_v\), the iteration formula
	shows that
	\[
	B_\lambda^Np_N=p_N.
	\]
	Thus \(p_N\) is periodic. Moreover,
	\[
	p_N-e_v
	=
	\widetilde f_v
	\chi_{\bigcup_{k\geq1}D_{m+kN}}.
	\]
	Consequently,
	\[
	\|p_N-e_v\|_{\operatorname{Ces}_p(T)}
	\leq
	\left\|
	\widetilde f_v
	\chi_{\bigcup_{r\geq m+N}D_r}
	\right\|_{\operatorname{Ces}_p(T)}
	\longrightarrow0
	\qquad (N\to\infty).
	\]
	Hence every basis vector is a limit of periodic points. Since the
	periodic points form a linear subspace and the canonical basis spans a
	dense subspace, they are dense in \(\operatorname{Ces}_p(T)\).
	
	We next show that \textup{(iii)} implies mixing. Fix
	\(v\in V\), put \(m=|v|\), and let \(f_v\) be as in \textup{(iii)}.
	By Lemma~\numref{lem:cesaro-level-decay},
	\[
	\frac{A_{|v|+n}(f_v)}{\Phi_p(|v|+n)}
	\longrightarrow0
	\qquad\text{as }n\to\infty.
	\]
	
	For \(f_v\) as above and \(n\geq1\), the fixed-point identity gives
	\[
	1
	=
	|B_\lambda^nf_v(v)|
	\leq
	c_{m+n}A_{m+n}(f_v)
	\max_{u\in\operatorname{Chi}^n(v)}
	|\lambda(v\to u)|.
	\]
	Therefore,
	\[
	c_{m+n}\Phi_p(m+n)
	\max_{u\in\operatorname{Chi}^n(v)}
	|\lambda(v\to u)|
	\geq
	\frac{\Phi_p(m+n)}{A_{m+n}(f_v)}
	\longrightarrow\infty,
	\qquad n\to\infty.
	\]
Since this holds for every \(v\in V\), for every nonempty finite set
\(F\subseteq V\) and every \(M\in\mathbb N\), the set in
Theorem~\numref{thm:F-transitivity-growth}\textup{(iii)} is cofinite.
Hence \(B_\lambda\) is
\(\mathcal F_{\mathrm{cof}}\)-transitive. By
Remark~\numref{rem:classical-dynamics}, \(B_\lambda\) is
topologically mixing and therefore hypercyclic. Together with the
density of its periodic points, this shows that \(B_\lambda\) is
chaotic. Thus \textup{(i)}--\textup{(iii)} are equivalent.
	
	\medskip
	
	\noindent
	\(\textup{(iv)}\Rightarrow\textup{(iii)}\).
	Fix \(v\in V\), let \(\theta\) be the unit flow in \textup{(iv)}, and
	define
	\[
	f_v(u)
	=
	\begin{cases}
		\dfrac{\theta(u)}{\lambda(v\to u)},&u\in V(v),\\[2ex]
		0,&u\notin V(v).
	\end{cases}
	\]
	Since \(\lambda(v\to v)=1\), we have \(f_v(v)=1\). If
	\(w\in V(v)\), then
	\[
	\begin{aligned}
		B_\lambda f_v(w)
		&=
		\sum_{u\in\operatorname{Chi}(w)}
		\lambda_u\frac{\theta(u)}{\lambda(v\to u)}\\
		&=
		\frac{1}{\lambda(v\to w)}
		\sum_{u\in\operatorname{Chi}(w)}\theta(u)
		=
		\frac{\theta(w)}{\lambda(v\to w)}
		=
		f_v(w).
	\end{aligned}
	\]
	Furthermore, if \(s\geq|v|\), then
	\[
	A_s(f_v)
	=
	\frac{1}{c_s}
	\sum_{u\in\operatorname{Chi}^{s-|v|}(v)}
	\frac{\theta(u)}{|\lambda(v\to u)|}.
	\]
	Hence condition \textup{(iv)} is precisely
	\[
	\|f_v\|_{\operatorname{Ces}_p(T)}^p<\infty.
	\]
	Thus \(f_v\in\operatorname{Ces}_p(T)\), and \textup{(iii)} follows.
	
	\medskip
	
	\noindent
	\(\textup{(iii)}\Rightarrow\textup{(iv)}\).
	Fix \(v\in V\), and let \(f_v\) satisfy \textup{(iii)}. For
	\(u\in V(v)\), put
	\[
	q(u)=\lambda(v\to u)f_v(u).
	\]
	Then
	\[
	q(v)=\lambda(v\to v)f_v(v)=1.
	\]
	Moreover, for every \(w\in V(v)\),
\begin{equation}\label{eq:q-flow-identity}
	\begin{aligned}
		q(w)
		&=\lambda(v\to w)f_v(w)
		=\lambda(v\to w)(B_\lambda f_v)(w)\\
		&=\sum_{u\in\operatorname{Chi}(w)}
		\lambda(v\to w)\lambda_u f_v(u)
		=\sum_{u\in\operatorname{Chi}(w)}q(u).
	\end{aligned}
\end{equation}
	
	We construct \(\theta\) level by level. Set \(\theta(v)=1\). Since
	\(q(v)=1\), we have \(0\leq\theta(v)\leq|q(v)|\). Suppose that
	\(\theta\) has been defined on
	\[
	\bigcup_{j=0}^{n}\operatorname{Chi}^j(v)
	\]
	and satisfies \(0\leq\theta(w)\leq|q(w)|\) there. For
	\(w\in\operatorname{Chi}^n(v)\), put
	\[
	S(w)=\sum_{u\in\operatorname{Chi}(w)}|q(u)|.
	\]
By~\eqnref{eq:q-flow-identity}, the induction hypothesis, and the triangle inequality,
\[
\theta(w)\leq |q(w)|\leq S(w).
\]

	For \(u\in\operatorname{Chi}(w)\), define
	\[
	\theta(u)
	=
	\begin{cases}
		\dfrac{\theta(w)|q(u)|}{S(w)},&S(w)>0,\\[2ex]
		0,&S(w)=0.
	\end{cases}
	\]
	If \(S(w)=0\), then \(\theta(w)=0\). Hence, in both cases,
	\[
	\sum_{u\in\operatorname{Chi}(w)}\theta(u)=\theta(w),
	\qquad
	0\leq\theta(u)\leq|q(u)|.
	\]
	Since every vertex in \(V(v)\setminus\{v\}\) has a unique parent, this
	defines \(\theta\) unambiguously on the next generation. Inductively,
	\(\theta\) is a unit flow on \(V(v)\) satisfying
\begin{equation}\label{eq:theta-dominance}
	0\leq\theta(u)\leq|q(u)|,
	\qquad u\in V(v).
\end{equation}
By \eqnref{eq:theta-dominance},
\[
\frac{\theta(u)}{|\lambda(v\to u)|}
\leq
\frac{|q(u)|}{|\lambda(v\to u)|}
=
|f_v(u)|.
\]
	It follows that
	\[
	\begin{aligned}
		&\sum_{N=|v|}^{\infty}
		\biggl(
		\frac{1}{N+1}
		\sum_{s=|v|}^{N}
		\frac{1}{c_s}
		\sum_{u\in\operatorname{Chi}^{s-|v|}(v)}
		\frac{\theta(u)}{|\lambda(v\to u)|}
		\biggr)^p\\
		&\qquad\leq
		\sum_{N=|v|}^{\infty}
		\biggl(
		\frac{1}{N+1}
		\sum_{s=|v|}^{N}A_s(f_v)
		\biggr)^p
		=
		\|f_v\|_{\operatorname{Ces}_p(T)}^p
		<\infty.
	\end{aligned}
	\]
	Thus \textup{(iv)} holds, completing the proof.
\end{proof}
The preceding characterizations yield a sharp threshold for constant
weights on homogeneous trees.
\begin{ex}\label{ex:homogeneous-constant-weight}
	Let \(T_q\) be the full \(q\)-ary tree, \(q\geq2\), and set
	\(\lambda_u=\alpha\neq0\) for \(u\neq o\). Since \(c_n=q^n\),
	Theorem~\numref{thm:boundedness} gives
	\(B_\lambda\in\mathcal L(\operatorname{Ces}_p(T_q))\).
	For \(v\in D_m\),
	\[
	c_{m+n}\Phi_p(m+n)
	\max_{u\in\operatorname{Chi}^n(v)}
	|\lambda(v\to u)|
	=
	q^m(q|\alpha|)^n\Phi_p(m+n).
	\]
	Hence \(B_\lambda\) is mixing if and only if \(q|\alpha|\geq1\).
	Moreover, every unit flow \(\theta\) on \(V(v)\) satisfies
	\[
	\frac{1}{c_{m+n}}
	\sum_{u\in\operatorname{Chi}^n(v)}
	\frac{\theta(u)}{|\lambda(v\to u)|}
	=
	q^{-m}(q|\alpha|)^{-n}.
	\]
	Thus Theorem~\numref{thm:chaos-flow-characterization} shows that
	\(B_\lambda\) is chaotic if and only if \(q|\alpha|>1\). In
	particular, \(|\alpha|=1/q\) gives a mixing but nonchaotic shift.
\end{ex}
The critical case \(q|\alpha|=1\) therefore provides a simple
separation between topological mixing and chaos.\\

\noindent\textbf{Conflict of Interest.}
The authors declare that they have no competing interests.

\noindent\textbf{Funding.}
This work was supported by the National Natural Science Foundation of
China Grant No. 12571088.

\noindent\textbf{Data Availability Statement.}
No datasets were generated or analyzed in this theoretical study.

% ------------------------------------------------------------------------
\end{document}